\theoremstyle{definition}
\newtheorem*{implementation}{Implementation}
\newtheorem*{notation}{Notation}
\newtheorem{theorem}{Theorem}[section]
\newtheorem{lemma}{Lemma}[section]  
\newtheorem{proposition}{Proposition}[section]
\newtheorem{example}{Example}[section]
\newtheorem{corollary}{Corollary}[section]
\theoremstyle{remark}
\newtheorem*{remark}{Remark}
\newcommand{\q}{\text{ }}
\title{Computer-Assisted Proofs of Congruences for Multipartitions and Divisor Function Convolutions, \\ based on Methods of Differential Algebra}
\author{Alexandru Pascadi}
\begin{document}
\maketitle

\begin{abstract}
This paper provides algebraic proofs for several types of congruences involving the multipartition function and self-convolutions of the divisor function. Our computations use methods of Differential Algebra in $\mathbb{Z}/q\mathbb{Z}$, implemented in a couple of MAPLE programs available as ancillary files on arXiv \cite{Maple1, Maple2}.

The first results of the paper are Ramanujan-type congruences of the form
$p^{*k}(qn+r) \equiv_q 0$ and $\sigma^{*k}(qn+r) \equiv_q 0$, where $p(n)$ and $\sigma(n)$ are the partition and divisor functions, $q > 3$ is prime, and $^{*k}$ denotes $k^{th}$-order self-convolution. We prove all the valid congruences of this form for $q \in \{5, 7, 11\}$, including the three Ramanujan congruences \cite{Ramanujan}, and a nontrivial one for $q = 17$. All such multipartition congruences have already been settled in principle up to a numerical verification due to D. Eichhorn and K. Ono \cite{Ono} via modular forms, but our proofs are purely algebraic. On the other hand, the majority of the divisor function congruences are new results.

We then proceed to search for more general congruences modulo small primes, concerning linear combinations of $\sigma^{*k}(pn+r)$ for different values of $k$, as well as weighted convolutions of $p(n)$ and $\sigma(n)$ with polynomial weights. The paper ends with a few corollaries and extensions for the divisor function congruences, including proofs for three conjectures of N. C. Bonciocat.
\end{abstract}

\section*{Acknowledgements}
The author expresses his gratitude to Professor Terence Tao for his kind support in the preparation of this paper, and to the UCLA Mathematics Department for providing the computing resources necessary to run MAPLE programs with large data.
\section{Introduction}

\begin{notation}
Given a positive integer $n$, we denote by $p(n)$ the number of unordered ways, called partitions, to write $n$ as a sum of positive integers. By convention, $p(0) := 1$ and $p(n) := 0$ for $n < 0$. Also, we denote by $\sigma(n)$ the sum of all the positive divisors of $n$; by convention, $\sigma(n) := 0$ for $n \leq 0$. The connection between these two functions, both of great importance in Number Theory and Combinatorics, arises from a relation between their generating functions defined below:
\begin{align*}
S(X) &:= \sum_{n = 1}^\infty \sigma(n)X^n = X + 3X^2 + 4X^3 + 7X^4 + 6X^5 + \ldots\\
P(X) &:= \sum_{n = 0}^\infty p(n)X^n = 1 + X + 2X^2 + 3X^3 + 5X^4 + \ldots = \prod_{k \geq 1} \left(1 - X^k\right)^{-1}
\end{align*}

The product formula for $P(X)$ is due to Euler and follows from a simple combinatorial argument \cite{Abramo}. To phrase the connection between $S(X)$ and $P(X)$ in a simple manner, let $\partial$ be the operator on formal power series defined by $(\partial F)(X) := X F'(X)$. One can easily check that $\partial$ is a derivation in the meaning from Differential Algebra, i.e. it has the same algebraic properties as differentiation w.r.t. addition and multiplication. Then, we have the following essential equality:

\begin{equation}
S = \frac{\partial P}{P}
\end{equation}
\end{notation}

The proof of this identity is quite straight-forward, using the alleged properties of $\partial$:
\begin{equation}
\begin{split}
\frac{\partial P}{P}(X)
&= \frac{\partial \prod_{k \geq 1} \left(1-X^k\right)^{-1}}{\prod_{k \geq 1} \left(1-X^k\right)^{-1}} \\
&= \sum_{k \geq 1} \frac{\partial \left(1 + X^k + X^{2k} + \ldots\right)}{\left(1-X^k\right)^{-1}} \\
&= \sum_{k \geq 1} \left( \sum_{d \geq 1} dk X^{dk} \right) \left(1-X^k\right) \\
&= \sum_{k \geq 1} \sum_{d \geq 1} d X^{dk} \q= \sum_{n \geq 1} \sigma(n)X^n \q= S(X)
\end{split}
\end{equation}

\begin{remark}
The short proof summarized in the relation above uses a technique equivalent to logarithmic differentiation, except that the standard derivative is replaced with the derivation $\partial$. Throughout this paper, we will prefer $\partial$ over the usual differentiation operator, because it preserves the positions of the coefficients of powers of $X$, in the sense that:

\begin{equation}
\partial\left( \sum_{n = 0}^\infty a_n X^n \right) = \sum_{n = 0}^\infty n a_n X^n
\end{equation}
\end{remark}

Two other essential relations that we will use are Euler's Pentagonal identity \cite{Pentagonal} and an identity of Jacobi \cite{Jacobi}:

\begin{align}
E(X) &:= \sum_{n \in \mathbb{Z}} (-1)^n X^{\frac{1}{2}n(3n+1)} = P(X)^{-1} \\
J(X) &:= \sum_{n \geq 0} (-1)^n (2n + 1) X^{\frac{1}{2} n(n+1)} = E(X)^3 = P(X)^{-3}
\end{align}

There exists a third, more complicated relation of this type due to Winquist \cite{Winquist, JacWin}, concerning $P(X)^{-10}$, but it is not needed for the results of this paper. Such identities of power series with integer coefficients can be reduced modulo a prime $q$, leading to elegant proofs of otherwise difficult congruences. Based on $(4)$ and $(5)$, M. D. Hirschhorn presents in \cite{ShortProof} a simple uniform method to prove all three of Ramanujan's famous partition congruences:
\begin{align}
p(5n + 4) &\equiv_5 0 \\
p(7n + 5) &\equiv_7 0 \\
p(11n + 6) &\equiv_{11} 0
\end{align}

By '$\equiv_q$', we mean congruence modulo $q$; such congruences are understood to hold for all $n \in \mathbb{Z}$.

The method employed by Hirschhorn is based on writing $E = E_0 + \ldots + E_{q-1}$ and $J = J_0 + \ldots + J_{q-1}$, where the components $E_r$ and $J_r$ only contain powers of $X$ whose exponents are congruent to $r$ modulo $q$. As we will prove later using $(4)$ and $(5)$, about half of these component power series are zero modulo $q$, and some clever algebraic manipulations can be used to further evaluate the analogous components of the partition series $P$. For example, one can discover by such arguments that $P_4 \equiv_5 0$, which is equivalent to the first Ramanujan congruence from $(6)$. This idea turns out to be algorithmizable \cite{Algorithm1} and applicable for many more Ramanujan-type congruences.

We will use a different algorithmizable method, based on manipulating differential equations modulo $q$, to obtain a wide class of congruences for multipartitions starting from the same identities in $(4)$ and $(5)$. We note that except for our algebraic methods and the methods employed by Hirschhorn \cite{ShortProof}, most proofs of partition congruences use notions of modular forms \cite{Ono, Ono2, Ono3, Newman, Lazarev}.

Relation $(1)$ will then allow us to deduce similar congruences for self-convolutions of the divisor function, which represents the main advantage of our method over the other available techniques. A secondary advantage is that our differential equations can be used to express and prove more general classes of congruences, involving linear combinations of Ramanujan-type terms as well as weighted convolutions of the partition and divisor functions. However, let us begin with the basics. The simplest class of results proven in this paper are Ramanujan-type congruences of the form:

\begin{equation}
p^{*k}(qn+r) \equiv_q 0 \qquad \text{and} \qquad \sigma^{*k}(qn+r) \equiv_q 0,
\end{equation}

where $q > 3$ is prime and $f^{*k}$ denotes the $k^{th}$ (discrete) self-convolution of a function $f : \mathbb{Z}_{\geq 0} \to \mathbb{C}$, viewed as an infinite sequence of complex numbers.

\begin{remark}
The function $p^{*k}$, also denoted $p_k$, is often called the \emph{multipartition function} or \emph{colored partition function}, and gives the number of partitions of a positive integer into parts colored in $k$ colors. Luckily, self-convolutions of functions like $p$ and $\sigma$ are easy to model with our notations:

\begin{align}
P^k(X) &= \sum_{n = 0}^\infty p^{*k}(n) X^n \\
S^k(X) &= \sum_{n = 0}^\infty \sigma^{*k}(n) X^n
\end{align}
\end{remark}

That being said, the core idea of our method is to rewrite the congruences from $(9)$ as:

\begin{equation}
f(\partial) P^k \equiv_q 0, \qquad\qquad f(\partial) S^k \equiv_q 0,
\end{equation}

where $f$ is a polynomial and $f(\partial)$ is the resulting operator (the congruence of power series is understood coefficient-wise). For instance, Ramanujan's first congruence in $(6)$ can be phrased as:

\begin{equation}
n(n-1)(n-2)(n-3)p(n) \equiv_5 0 \qquad \iff \qquad \partial(\partial - 1)(\partial - 2)(\partial - 3) P \equiv_5 0,
\end{equation}

since the above polynomial cancels out for $n \not\equiv_5 4$. This enables us to translate all the desired congruences modulo $q$ into differential equations with coefficients from $\mathbb{Z}/q\mathbb{Z}$. The problem of proving Ramanujan-type congruences is then reduced to finiding whether a given differential equation like $(13)$ belongs to the (differential) ideal generated by several differential equations derived from $(4)$ and $(5)$, and the derivation $\partial$. While differential Gr\"obner bases would do the trick if we worked in the field of real or complex numbers, ordinary Gr\"obner bases turn out to be preferable when we work modulo $q$ (see Section $3$ for details).

Such multipartition congruences are also studied in a recent article \cite{Lazarev}, which includes the case when the modulus is a prime power. Long before that, D. Eichhorn and K. Ono \cite{Ono} found a way to prove in principle any multipartition congruence of the form
$p^{*k}(tn + r) \equiv_{m} 0$,
where $t$ is an integer and $m$ is a prime power. More precisely, they found a large enough constant such that the congruence is true iff it verifies numerically up to that constant. S. Radu later discovered an algorithmic approach \cite{Algorithm2} that improved the bounds found by Eichhorn and Ono.

\begin{remark}
As opposed to the methods listed above, our differential algebraic approach is not suitable (at least not in an obvious way) for proving congruences modulo prime powers. One reason for this incompatibilty is that many tools of differential algebra work only when the coefficients used lie in a field. Another reason is that it is not always possible to find polynomials modulo prime powers that have a specific set of roots, such as those from relation $(13)$. Nevertheless, the reader is invited to try to work around these problems and generalize the methods presented here.
\end{remark}

There is less literature on the topic of divisor function congruences, where this paper brings most of its original contributions. A few papers \cite{Gandhi4, Ramanathan} study congruences of the form $\sigma(qn - 1) \equiv_q 0$ where $q$ is a divisor of $24$, and others \cite{Gupta} study congruences modulo primes $q > 3$. However, not much is known when we consider convolutions of the divisor function. A primary contribution in this direction was due to N. C. Bonciocat \cite{Bonciocat}, who discovered that:

\begin{equation}
\sigma^{*2}(5n + 1) \equiv_5 0 \qquad \text{and} \qquad \sigma^{*2}(7n + 6) \equiv_7 0
\end{equation}

L. H. Gallardo later found simpler proofs for these congruences \cite{RefBonc}, based on certain identities for convolutions of divisor functions; see \cite{Hahn} for an inventory of such identities. We will extend the results above and prove that the congruence $\sigma^{*k}(qn + r) \equiv_q 0$ holds for $8$ other triplets $(q, k, r)$. In fact, a numerical verification shows that our methods yield all the valid congruences of this form for $q \in \{5, 7\}$, and that for $q \in \{11, 13\}$ there are no such congruences.

The next step is to observe that a much richer class of congruences can be expressed as differential equations modulo $q$. Perhaps the most intuitive generalization of Ramanujan-type congruences arises by taking linear combinations of the terms in $(9)$ (and $(12)$) for various values of $k$. This idea will lead to results such as:

\begin{equation}
\left(\sigma^{*5} + \sigma^{*4}\right)(11n+2) \equiv_{11} 0
\end{equation}

While there are no exceptional congruences of this type for multipartitions, there are many of them for divisor functions - so many, in fact, that the only reasonable way to enumerate them is to provide a basis for their vector space. The following table gathers all the results discussed so far:

\begin{center}
\small
\begin{tabular}{c | c | c | c}
Congruences & $q = 5$ & $q = 7$ & $q = 11$ \\ \hline
\makecell{ \\
$p^{*k}(qn + r) \equiv_q 0$ \\
$\color{gray} ( \text{showing } k \not\equiv_q 0, -1, -3)$ \\
} & \makecell{
$(k, r) = (1, 4)$
} & \makecell{
$(k, r) = (1, 5)$
} & \makecell{
$(k, r) = (1, 6), (3, 7),$ \\
$(5, 8), (7, 9)$ \\
} \\ \hline
\makecell{ \\
$\sigma^{*k}(qn + r) \equiv_q 0$ \\
$\color{gray} (\text{showing } k \not\equiv_q 0)$ \\
} & \makecell{
$(k, r) = (2, 1),$ $\boldsymbol{(3, 1)},$ \\
$\boldsymbol{(3, 2)}, \boldsymbol{(4, 1)}, \boldsymbol{(4, 2)}, \boldsymbol{(4, 3)}$ \\
} & \makecell{
$(k, r) = (2, 6),$ \\
$\boldsymbol{(4, 3)}, \boldsymbol{(4, 6)}, \boldsymbol{(5, 3)}$
} & - \\ \hline

\makecell[t]{ \\
$\sum_{k = 1}^{q-1} c_k \sigma^{*k}(qn + r) \equiv_q 0$ \\ \\
{\color{gray} (showing only a basis} \\
{\color{gray} of their vector space} \\
{\color{gray} for each value of $r$)}
} & \makecell[t]{ \\
$r = 0, (c_1, \ldots, c_4) =$ \\
$\boldsymbol{(1, 0, 0, 2)},$ \\
$\boldsymbol{(0, 1, 0, 1)},$ \\
$\boldsymbol{(0, 0, 1, 3)}$ \\ \\
$r = 1, (c_1, \ldots, c_4) =$ \\
$(0, 1, 0, 0),$ \\
$(0, 0, 1, 0),$ \\
$(0, 0, 0, 1)$ \\ \\
$r = 2, (c_1, \ldots, c_4) =$ \\
$\boldsymbol{(1, 2, 0, 0)},$ \\
$(0, 0, 1, 0),$ \\
$(0, 0, 0, 1)$ \\ \\
$r = 3, (c_1, \ldots, c_4) =$ \\
$\boldsymbol{(1, 0, 1, 0)},$ \\
$\boldsymbol{(0, 1, 4, 0)},$ \\
$(0, 0, 0, 1)$ \\ \\
$r = 4, (c_1, \ldots, c_4) =$ \\
$\boldsymbol{(1, 0, 0, 3)},$ \\
$\boldsymbol{(0, 1, 0, 3)},$ \\
$\boldsymbol{(0, 0, 1, 1)}$ \\
} & \makecell[t]{ \\
$r = 0, (c_1, \ldots, c_6) =$ \\
$\boldsymbol{(1, 0, 0, 5, 4, 0)},$ \\
$\boldsymbol{(0, 1, 0, 1, 2, 0)},$ \\
$\boldsymbol{(0, 0, 1, 4, 3, 0)}$ \\ \\
$r = 1, (c_1, \ldots, c_6) =$ \\
$\boldsymbol{(0, 1, 0, 0, 3, 0)},$ \\
$\boldsymbol{(0, 0, 1, 0, 1, 0)},$ \\
$\boldsymbol{(0, 0, 0, 1, 3, 0)}$ \\ \\
$r = 2, (c_1, \ldots, c_6) =$ \\
$\boldsymbol{(1, 4, 0, 0, 6, 0)},$ \\
$\boldsymbol{(0, 0, 1, 0, 3, 0)},$ \\
$\boldsymbol{(0, 0, 0, 1, 1, 0)}$ \\ \\
$r = 3, (c_1, \ldots, c_6) =$ \\
$\boldsymbol{(1, 0, 3, 0, 0, 0)},$ \\
$\boldsymbol{(0, 1, 1, 0, 0, 0)},$ \\
$(0, 0, 0, 1, 0, 0),$ \\
$(0, 0, 0, 0, 1, 0)$ \\ \\
$r = 4, (c_1, \ldots, c_6) =$ \\
$\boldsymbol{(1, 0, 0, 0, 4, 0)},$ \\
$\boldsymbol{(0, 1, 0, 4, 5, 0)},$ \\
$\boldsymbol{(0, 0, 1, 5, 2, 0)}$ \\ \\
$r = 5, (c_1, \ldots, c_6) =$ \\
$\boldsymbol{(1, 0, 0, 0, 1, 0)},$ \\
$\boldsymbol{(0, 1, 0, 0, 4, 0)},$ \\
$\boldsymbol{(0, 0, 1, 0, 3, 0)},$ \\
$\boldsymbol{(0, 0, 0, 1, 2, 0)}$ \\ \\
$r = 6, (c_1, \ldots, c_6) =$ \\
$\boldsymbol{(1, 0, 0, 0, 2, 0)},$ \\
$(0, 1, 0, 0, 0, 0),$ \\
$\boldsymbol{(0, 0, 1, 0, 6, 0)},$ \\
$(0, 0, 0, 1, 0, 0)$ \\
} & \makecell[t]{ \\
$r = 0, (c_1, \ldots, c_{7}) =$ \\
$\boldsymbol{(1, 0, 10, 7, 3, 3, 7)},$ \\
$\boldsymbol{(0, 1, 6, 1, 6, 9, 10)}$ \\ \\
$r = 1, (c_1, \ldots, c_{7}) =$ \\
$\boldsymbol{(0, 1, 0, 0, 5, 0, 3)},$ \\
$\boldsymbol{(0, 0, 1, 0, 1, 0, 6)},$ \\
$\boldsymbol{(0, 0, 0, 1, 9, 0, 10)}$ \\ \\
$r = 2, (c_1, \ldots, c_{7}) =$ \\
$\boldsymbol{(1, 8, 0, 0, 0, 2, 10)},$ \\
$\boldsymbol{(0, 0, 1, 0, 0, 5, 3)},$ \\
$\boldsymbol{(0, 0, 0, 1, 0, 7, 2)},$ \\
$\boldsymbol{(0, 0, 0, 0, 1, 4, 9)}$ \\ \\
$r = 3, (c_1, \ldots, c_{7}) =$ \\
$\boldsymbol{(1, 0, 7, 0, 6, 6, 4)},$ \\
$\boldsymbol{(0, 1, 5, 0, 9, 5, 7)},$ \\
$\boldsymbol{(0, 0, 0, 1, 4, 1, 8)}$ \\ \\
$r = 4, (c_1, \ldots, c_{7}) =$ \\
$\boldsymbol{(1, 0, 0, 4, 2, 9, 4)},$ \\
$\boldsymbol{(0, 1, 0, 5, 10, 5, 1)},$ \\
$\boldsymbol{(0, 0, 1, 2, 10, 1, 9)}$ \\ \\
$r = 5, (c_1, \ldots, c_{7}) =$ \\
$\boldsymbol{(0, 0, 1, 4, 1, 5, 9)},$ \\
$\boldsymbol{(0, 1, 0, 10, 7, 10, 7)},$ \\
$\boldsymbol{(0, 0, 1, 4, 1, 5, 9)}$ \\ \\
$r = 6, (c_1, \ldots, c_{7}) =$ \\
$\boldsymbol{(1, 0, 0, 0, 4, 5, 5)},$ \\
$\boldsymbol{(0, 1, 0, 0, 9, 4, 4)},$ \\
$\boldsymbol{(0, 0, 1, 0, 4, 7, 7)},$ \\
$\boldsymbol{(0, 0, 0, 1, 0, 7, 7)}$ \\ \\
$r = 7, (c_1, \ldots, c_{7}) =$ \\
$\boldsymbol{(1, 0, 0, 0, 3, 6, 5)},$ \\
$\boldsymbol{(0, 1, 0, 0, 9, 10, 1)},$ \\
$\boldsymbol{(0, 0, 1, 0, 9, 5, 6)},$ \\
$\boldsymbol{(0, 0, 0, 1, 10, 1, 10)}$ \\ \\
$r = 8, (c_1, \ldots, c_{7}) =$ \\
$\boldsymbol{(1, 0, 0, 0, 9, 7, 5)},$ \\
$\boldsymbol{(0, 1, 0, 0, 8, 1, 7)},$ \\
$\boldsymbol{(0, 0, 1, 0, 8, 0, 0)},$ \\
$\boldsymbol{(0, 0, 0, 1, 0, 8, 1)}$ \\ \\
$r = 9, (c_1, \ldots, c_{7}) =$ \\
$\boldsymbol{(1, 0, 0, 6, 10, 9, 7)},$ \\
$\boldsymbol{(0, 1, 0, 5, 8, 4, 8)},$ \\
$\boldsymbol{(0, 0, 1, 4, 10, 2, 4)}$ \\ \\
$r = 10, (c_1, \ldots, c_{7}) =$ \\
$\boldsymbol{(1, 0, 0, 0, 9, 4, 1)},$ \\
$\boldsymbol{(0, 1, 0, 0, 9, 5, 4)},$ \\
$\boldsymbol{(0, 0, 1, 0, 0, 6, 7)},$ \\
$\boldsymbol{(0, 0, 0, 1, 3, 6, 7)}$
}
\end{tabular}
\end{center}

\begin{remark} Firstly, in the former table, the results that are new to the best of the author's knowledge are shown in bold (only once if they appear in multiple equivalent forms). Secondly, in the first two rows of the table, adding a multiple of the prime $q$ to the convolution exponent $k$ preserves the congruences (see Lemma $3.2$). Thirdly, a large number of trivial congruences are not shown (e.g., when $k \equiv_q 0$), but can be found easily by following our methods. \textbf{Also, in the presentation of the vector space bases for $\boldsymbol{q = 11}$, the coefficients $\boldsymbol{c_8, c_9, c_{10}}$ are omitted for notation brevity, since they are always 0} (i.e., their corresponding terms are irrelevant in all congruences).
\end{remark}

\begin{example}
The quadruple $(c_1, \ldots, c_4) = (0, 1, 4, 0)$ for $r = 3$ in the third line, first column of tuples from the former table should be read as the following congruence:

\[
\left(\sigma^{*2} + 4\sigma^{*3}\right)(5n + 3) \equiv_5 0
\]

In other words, for any integer $n \equiv_5 3$, one has:

\[
\sigma^{*2}(n) \equiv_5 \sigma^{*3}(n).
\]
\end{example}

To these results we should add the following congruence modulo $17$:

\begin{equation}
p^{*13}(17n + 14) \equiv_{17} 0
\end{equation}

Normally, this relation would fit in the first line of the table from the previous page, on a column dedicated to the prime $q = 17$. However, the rest of that column would be empty, so we chose not to include $(16)$ in the table for reasons of aesthetics. For a thorough study of such congruences modulo powers of $17$, see K. Hughes \cite{17powers}; for similar studies modulo powers of other primes, see \cite{Chen, Dazhao, Wang}. It is possible in principle to prove other congruences for $q = 17$ (or for bigger primes) with our methods, but our algorithms take too much time and memory. For $q = 13$, a numerical generation shows that there are no exceptional congruences to be found.

Another class of congruences generalizing those of Ramanujan involves weighted convolutions:

\begin{notation}
Given a function $f : \mathbb{Z}_{\geq 0} \to \mathbb{C}$, a positive integer $k$ and a so-called weight function $w : {\mathbb{Z}_{\geq 0}}^k \to \mathbb{C}$, we denote:

\begin{equation}
f^{*k}_w (n) := \sum_{\substack{a_1, \ldots, a_k \in\q \mathbb{Z}_{\geq 0} \\ a_1 + \ldots + a_k = n}} w(a_1, \ldots, a_k) f(a_1)\cdot \ldots \cdot f(a_k)
\end{equation}

Note that for $w \equiv 1$, $f^{*k}_w = f^{*k}$. As a simple example, relation $(13)$ can be rewritten as $p^{*1}_{w} \equiv_5 0$, where $w(x) = x(x-1)(x-2)(x-3)$.
\end{notation}

Using the same methods of differential algebra, it is possible to prove congruences of the form $p^{*k}_w \equiv_q 0$ and $\sigma^{*k}_w \equiv_q 0$, where $w$ is a multivariate polyonomial (of course, since the argument of the functions is omitted, these congruences are understood to hold when evaluated at any integer). Due to the high dimensionality of the weight function's parameters, such congruences are in fact much harder to discover than to prove. In Subsection $5.3$, we analyze a fraction of these congruences for when $k = 2$, to obtain the results listed in the table below:

\begin{center}
\small
\begin{tabular}{c | c | c}
Congruences & $\text{Weights } w(a, b) \text{ that work for } q = 5$ & $\text{Weights } w(a, b) \text{ that work for } q = 7$ \\ \hline
\makecell{
$p^{*2}_w(n) \equiv_q 0$ \\
\color{gray} (showing $w$ consisting of \\
\color{gray} at most $q - 2$ terms) \\
} &
\makecell{ \\
$\boldsymbol{a^2 + ab + 4a},$ \\ \\
$\boldsymbol{a^3 + 3a^2b + 4a},$ \\ \\
$\boldsymbol{a^4 + 4a^3 + 4ab}$ \\ \\
} & 
\makecell{ \\
$\boldsymbol{a^6 + a^5 + 6a^4 + 6a^2 + 2ab},$ \\ \\
$\boldsymbol{a^6 + 3a^4 + 2a^3 + 6ab + a}$ \\ \\
}
\\ \hline

\makecell{
$\sigma^{*2}_w(n) \equiv_q 0$ \\
\color{gray} (showing $w$ consisting of \\
\color{gray} at most $q - 2$ terms) \\
} &
\makecell{ \\
$\boldsymbol{a^2 + 3ab + a},$ \\ \\
$\boldsymbol{a^3b + a^2b^2 + 3ab},$ \\ \\
$\boldsymbol{a^4b + 3a^3b^2 + a^2b},$ \\ \\
$\boldsymbol{a^4b^2 + 2a^3b^3 + 2ab}$ \\ \\
} & 
\makecell{ \\
$\boldsymbol{a^6b^2 + 6a^5b^3 + 4a^4b^4 + 3ab}$ \\ \\
}
\\

\end{tabular}
\end{center}

\begin{example}
The weight polynomial function $a^6 + a^5 + 6a^4 + 6a^2 + 2ab$ (from the first line, second column of weights from the previous table) summarizes the following congruence:

\[
\sum_{a+b=n} a(a^5 + a^4 + 6a^3 + 6a + 2b) \q p(a)p(b) \equiv_7 0
\]
\end{example}

In the end of the paper, we explore some related congruences and conjectures concerning (convolutions of) divisor functions, in particular the odd and even divisor functions. Our results include three conjectures of N. C. Bonciocat \cite{Bonciocat}, as well as other miscellaneous congruences such as:

\begin{equation}
\sum_{\substack{a+b = n \\ a, b \geq 1}} \sigma(ab) \equiv_5
\begin{cases}
0, & n \equiv_5 1, 4\\
1, & n \equiv_5 2, 3\\
3, & n \equiv_5 0
\end{cases}
\end{equation}

(for any positive integer $n$).

\section{Two Illustrative Proofs}

Here we prove Ramanujan's first two congruences to illustrate our methods. Unlike the algorithms presented in the next section, which use both relations $(4)$ and $(5)$, these proofs will only need Jacobi's identity:

\begin{equation}
P(X)^{-3} = \sum_{n \geq 0} (-1)^n (2n + 1) X^{\frac{1}{2} n(n+1)}
\end{equation}

Additionally, we will take some shortcuts here that the algorithmic method cannot generally apply, but the main idea of the proofs is the same.

\begin{theorem}[Ramanujan's First Congruence]
For any (nonnegative) integer $n$, one has:

\begin{equation}
p(5n + 4) \equiv_5 0
\end{equation}
\end{theorem}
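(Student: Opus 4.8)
The plan is to work modulo 5 using Jacobi's identity $(20)$ together with the product formula for $P$, exploiting the fact that raising a power series to the $5$th power is a Frobenius-type operation modulo $5$. Concretely, I would start from $P^{-3}$ as given by Jacobi, and aim to extract information about the coefficients of $P$ on the arithmetic progression $5n+4$. Since the derivation $\partial$ preserves the position of each coefficient (relation $(3)$), the target congruence $p(5n+4)\equiv_5 0$ is equivalent to the operator identity $\partial(\partial-1)(\partial-2)(\partial-3)P \equiv_5 0$ recorded in $(13)$: the polynomial $n(n-1)(n-2)(n-3)$ vanishes modulo $5$ precisely when $n\not\equiv_5 4$, so proving the congruence amounts to showing that this fourth-order operator kills $P$ modulo $5$.

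First I would set up a differential equation for $P$ modulo $5$ by differentiating Jacobi's relation $P^{-3}=J$ using $\partial$. Applying $\partial$ and using that $\partial$ is a derivation gives $-3P^{-3}\,\partial P/P = \partial J$, i.e. $-3 P^{-4}\,\partial P = \partial J$, which via $(1)$ in the form $\partial P = SP$ relates $S$ to $\partial J / J$. The key structural fact is that modulo $5$ the exponents appearing in $J=\sum_{n\geq 0}(-1)^n(2n+1)X^{n(n+1)/2}$ are triangular numbers $n(n+1)/2$, and one checks which residues these occupy modulo $5$. The triangular numbers modulo $5$ take values in $\{0,1,3\}$ and crucially never hit $\{2,4\}$; moreover the coefficient $(2n+1)$ is divisible by $5$ exactly when $n\equiv_5 2$. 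Combining these, the component of $J$ supported on exponents $\equiv_5 4$ vanishes modulo $5$, and this is the lever that forces $P$ to vanish on the progression $5n+4$.

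The central step is then to convert this vanishing of a component of $J=P^{-3}$ into the desired vanishing of a component of $P$ itself. Here I would use the Frobenius identity: modulo $5$ one has $P(X)^5 \equiv P(X^5)$ since $\prod_k(1-X^k)^{-5}\equiv \prod_k(1-X^{5k})^{-1}$, so that $P = P^5\cdot P^{-4} \equiv P(X^5)\cdot P^{-4} = P(X^5)\cdot J\cdot P^{-1}$ — more cleanly, I would write $P \equiv P(X^5)\, P^{-4}$ and analyze $P^{-4}=P^{-3}\cdot P^{-1}=J\cdot P^{-1}$, or directly multiply Jacobi's square-type expansions. The factor $P(X^5)$ contributes only exponents divisible by $5$, so the residue class modulo $5$ of any exponent in $P$ is governed entirely by the second factor; extracting the coefficient on $5n+4$ reduces to extracting a specific residue component of a product of two Jacobi-type series, each supported on triangular-number exponents. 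One then shows that the exponent $4 \pmod 5$ cannot be realized as a sum of two admissible triangular residues with nonzero coefficient.

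The main obstacle I anticipate is precisely this last combinatorial-arithmetic bookkeeping: one must verify carefully that no pairing of the restricted exponent sets (with their signed, possibly $5$-divisible coefficients) produces a surviving contribution in the class $4\pmod 5$, and getting the modular arithmetic of the triangular numbers and the linear coefficients $(2n+1)$ exactly right — including the coincidences where $(2n+1)\equiv_5 0$ absorbs otherwise-problematic terms — is where the proof genuinely has to be checked rather than waved through. Everything else is formal manipulation of power series via the derivation $\partial$ and the Frobenius congruence; the arithmetic of which residues are hit, and with what coefficients, is the crux.
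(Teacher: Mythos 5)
Your plan is correct, but it is a genuinely different proof from the one in the paper. What you describe is the classical Hirschhorn-style dissection argument: write $P = P^5\cdot P^{-4} \equiv_5 P(X^5)\cdot E\cdot J$ via the Frobenius congruence, and observe that $P(X^5)$ only shifts exponents by multiples of $5$, so the residue classes of exponents surviving in $P$ are controlled by the support of $E\cdot J$. The bookkeeping you defer does work out: $J$ is supported modulo $5$ on exponent residues $\{0,1\}$ (triangular numbers hit $\{0,1,3\}$, and the class $3$ is killed because it forces $n\equiv_5 2$, whence $2n+1\equiv_5 0$), while $E$ is supported on residues $\{0,1,2\}$ --- note these are \emph{pentagonal}, not triangular, numbers, a slip in your description, and establishing this requires Euler's identity $(4)$ in addition to Jacobi's. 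The sum-set $\{0,1\}+\{0,1,2\}=\{0,1,2,3\}$ misses $4$ modulo $5$, which closes the argument. The paper's Section~2 proof instead uses only Jacobi's identity: it encodes the support of $J$ as the differential equation $\left(P^{-3}\right)''\equiv_5 0$, transfers this to $\left(P^2\right)''\equiv_5 0$ using $\left(P^5\right)'\equiv_5 0$, introduces $T:=P/P'$ with $T'\equiv_5 2$, and differentiates twice more to conclude $P^{(4)}\equiv_5 0$. Your route is more elementary and closer to the standard literature (the paper cites it as Hirschhorn's method in the introduction); the paper's route is deliberately chosen to showcase the differential-algebra manipulations that its general algorithm in Section~3 mechanizes, and it economizes on inputs by avoiding Euler's pentagonal identity entirely.
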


\begin{proof}
In this proof, we use the usual differentiation operator instead of the derivation $\partial$ defined in the introduction. While $\partial$ is more helpful in the general case, standard differentiation leads to shorter proofs in the particular case when $q = 5$. That being said, note that in equation $(19)$, the value $\frac{1}{2}n(n+1)$ can only assume the residues $0$, $1$ and $3$ modulo $5$. Moreover, when it assumes the residue $3$, one must have $n \equiv_5 2$, which cancels the factor $2n+1$ modulo $5$. Therefore, the powers of $X$ with exponents that are congruent to $2$, $3$ or $4$ modulo $5$ have coefficients divisible by $5$ in the RHS of $(19)$. In the language of differential equations, this is equivalent to the fact that:

\begin{equation}
\left(P^{-3}\right)'' \equiv_5 0
\end{equation}

Indeed, the double differentiation applied to each $X^k$ will bring down the coefficient $k(k-1)$, which cancels out modulo $5$ when $k \equiv_5 0, 1$. Since the other coefficients are divisible by $5$ anyway, the whole series is congruent to $0$ mod $5$. Now from $(21)$ it follows that:

\begin{equation}
\left(P^2\right)'' = \left(P^5 \cdot P^{-3}\right)'' \equiv_5 P^5\left(P^{-3}\right)'' \equiv_5 0
\end{equation}

The factor $P^5$ jumps out the parentheses above because $\left(P^5\right)' = 5P^4 \equiv_5 0$. Rewrite the LHS as:

\begin{equation}
\left(P^2 \right)'' = (2PP')' = 2\left( (P')^2 + PP'' \right) \equiv_5 0
\end{equation}

Write $T := \frac{P}{P'}$. Since we can divide by $P$, we conclude that:

\begin{equation}
- P' \equiv_5 \frac{P}{P'} P'' = TP^{(2)}
\end{equation}

Moreover, one can see that:

\begin{equation}
T' = \frac{(P')^2 - P P''}{(P')^2} \equiv_5 \frac{2(P')^2}{(P')^2} \equiv_5 2
\end{equation}

We proceed to differentiate relation $(24)$ twice using the fact that $T' \equiv_5 2$:
\begin{align}
-P' &\equiv_5 TP^{(2)} \\
-P^{(2)} &\equiv_5 2P^{(2)} + TP^{(3)} \\
-P^{(3)} &\equiv_5 2P^{(3)} + 2P^{(3)} + TP^{(4)}
\end{align}

Therefore:
\begin{equation}
TP^{(4)} \equiv_5 -5P^{(3)} \equiv_5 0 \q\quad \Rightarrow \quad P^{(4)} \equiv_5 0
\end{equation}

So every monomial of the form $X^{5n + 4}$ has a coefficient divisible by $5$ in $P(X)$, i.e. $p(5n+4) \equiv_5 0$.

\end{proof}

The proof of the second Ramanujan congruence will illustrate why we cannot count on the standard differentiation operator in the general case:

\begin{theorem}[Ramanujan's Second Congruence]
For any (nonnegative) integer $n$, one has:

\begin{equation}
p(7n + 5) \equiv_5 0
\end{equation}
\end{theorem}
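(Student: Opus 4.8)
The plan is to imitate the structure of the previous proof, but to replace ordinary differentiation by the derivation $\partial$, since the residue class we must isolate ($5$ modulo $7$) cannot be singled out by a product of consecutive-integer factors the way residue $4$ was cut out modulo $5$ by $P^{(4)}$. I would begin by reducing Jacobi's identity $(19)$ modulo $7$. The triangular exponents $\frac{1}{2}n(n+1)$ run through the residues $\{0,1,3,6\}$ modulo $7$ as $n$ ranges over a period, while the coefficient $2n+1$ vanishes modulo $7$ precisely when $n\equiv_7 3$ — which is exactly the case producing exponent residue $6$. Hence every monomial of $P^{-3}$ whose exponent is $\equiv_7 2,4,5,6$ has a coefficient divisible by $7$, so that $P^{-3}$ is supported modulo $7$ only on exponents $\equiv_7 0,1,3$. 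In the language of differential equations this reads
\begin{equation}
\partial(\partial-1)(\partial-3)\,P^{-3}\equiv_7 0,
\end{equation}
since the operator $\partial(\partial-1)(\partial-3)$ multiplies the coefficient of $X^k$ by $k(k-1)(k-3)$, a quantity that is nonzero modulo $7$ exactly for $k\equiv_7 2,4,5,6$.

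Next I would pass from $P^{-3}$ back to $P$. The key mechanism, analogous to the factor $P^5$ in the mod-$5$ argument, is that $\partial(P^7)=7P^6\,\partial P\equiv_7 0$, equivalently $P^7\equiv_7 P(X^7)$ by the Frobenius congruence for power series; thus $P^7$ behaves like a constant for $\partial$ and contributes only exponents $\equiv_7 0$. I would then write $P=P^7\cdot\left(P^{-3}\right)^2$ (indeed $P^7P^{-6}=P$) and read off the support: since $P^{-3}$ lives on residues $\{0,1,3\}$, its square lives on the sumset $\{0,1,3\}+\{0,1,3\}=\{0,1,2,3,4,6\}$ modulo $7$, and multiplying by the residue-$0$ series $P(X^7)$ leaves this support unchanged. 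Crucially $5\notin\{0,1,2,3,4,6\}$, so $P$ carries no monomial with exponent $\equiv_7 5$; that is, $p(7n+5)\equiv_7 0$.

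I expect the main obstacle to lie in the first step, specifically in the observation that the residue-$6$ terms of $P^{-3}$ disappear: this is \emph{not} forced by the triangular exponents alone (which do attain residue $6$), but only emerges after noting that the attached coefficient $2n+1$ is annihilated modulo $7$, exactly as $2n+1$ killed the residue-$3$ terms modulo $5$ in the first congruence. The second delicate point is conceptual rather than computational: one must recognize that residue $5$ is inaccessible to the operator $\frac{d^m}{dX^m}$, whose symbol $k(k-1)\cdots(k-m+1)$ can only excise a block of consecutive residues, and this is precisely why the argument is phrased through $\partial$ and the factored operator $\partial(\partial-1)(\partial-3)$ rather than through standard derivatives. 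Once these two points are secured, the sumset computation excluding residue $5$ is immediate.
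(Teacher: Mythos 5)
Your proposal is correct, but its second half takes a genuinely different route from the paper's. You and the paper share the first step verbatim: the triangular exponents hit only $\{0,1,3,6\}$ modulo $7$, the residue $6$ forces $n\equiv_7 3$ which kills $2n+1$, and hence $\partial(\partial-1)(\partial-3)\left(P^{-3}\right)\equiv_7 0$, i.e.\ $P^{-3}$ is supported modulo $7$ on exponent residues $\{0,1,3\}$. From there the paper stays inside differential algebra: it multiplies by $P^7$ to get $\partial(\partial-1)(\partial-3)\left(P^4\right)\equiv_7 0$, divides by $Q=P^4$, expresses $\frac{\partial^n Q}{Q}$ and $\frac{\partial^n P}{P}$ as differential polynomials in $S=\frac{\partial P}{P}$, and lands on the identity $\left(\partial^3-\partial^2-\partial\right)P=(4\partial+1)(P\,\partial S)$, whose right side visibly annihilates the residue-$5$ exponents while $5^3-5^2-5\not\equiv_7 0$ on the left. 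You instead write $P=P^7\cdot\left(P^{-3}\right)^2$, note that $P^7$ is supported on residue $0$ (since $\partial P^7\equiv_7 0$), and conclude by the sumset computation $\{0,1,3\}+\{0,1,3\}=\{0,1,2,3,4,6\}\not\ni 5$. Your argument is shorter and entirely elementary --- it is essentially Hirschhorn's component method that the paper cites in its introduction, and every step of it is sound. What the paper's longer route buys is exactly what this section is meant to advertise: it rehearses the general mechanism (target relation, reduction to differential polynomials in $S$) that the algorithms of Sections 3--4 automate, and it produces as a byproduct the relation $S^3\equiv_7 S^2+S\partial S+2S+2\partial S+3\partial^2 S$, a divisor-function congruence that a pure support/sumset argument cannot see, since $\sigma$ enters only through the logarithmic derivative $S=\frac{\partial P}{P}$. (One small point of economy in your write-up: you do not need the full Frobenius identity $P^7\equiv_7 P(X^7)$; the observation $\partial\left(P^7\right)=7P^6\,\partial P\equiv_7 0$ already pins the support of $P^7$ to multiples of $7$.)
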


\begin{proof}
As in the start of the previous proof, we remark that the value $\frac{1}{2}n(n+1)$ can only assume the residues $0$, $1$, $3$ and $6$ modulo $7$. Furthermore, when it assumes the residue $6$, we must have $n \equiv_7 3$, which cancels the factor $2n+1$ modulo $7$. Hence the powers of $X$ with exponents that are congruent to $2$, $4$, $5$ or $6$ modulo $7$ occur with coefficients divisible by $7$ in the RHS of relation $(19)$. Using the derivation operator $\partial$, we can express this fact as:

\begin{equation}
\partial(\partial - 1)(\partial - 3)\left(P^{-3}\right) \equiv_7 0
\end{equation}

Indeed, the differential operator $\partial(\partial - 1)(\partial - 3)$ will cancel out (mod $7$) all powers of $X$ whose exponents are congruent to $0$, $1$ or $3$ modulo $7$, and the other powers have coefficients divisible by $7$ anyway. This is the same idea that led to equation $(13)$. Now by the same reasoning as in the previous proof, a factor of $P^7$ applied to the LHS above will jump right into the argument $P^{-3}$ of the differential operator, since:

\begin{equation}
\partial P^7(X) = 7X P^6(X) \equiv_7 0
\end{equation}

Therefore:
\begin{equation}
\partial(\partial - 1)(\partial - 3)\left(P^4\right) \equiv_7 0
\end{equation}

In the general case, we (our algorithms) would expand this relation and differentiate it several times to construct a Gr\"obner basis for an ideal of valid differential equations. However, we will take a shortcut here and divide the given equation by $Q := P^4$ (which is clearly invertible):

\begin{equation}
\frac{\partial^3Q}{Q} + 3\frac{\partial^2Q}{Q} + 3\frac{\partial Q}{Q} \equiv_7 0
\end{equation}

Since $\partial$ is a derivation and $Q = P^4$, one can infer that:
\begin{align}
\frac{\partial Q}{Q} &= 4\frac{\partial P}{P} = 4S \\
\frac{\partial^2 Q}{Q} &= \left(\partial + \frac{\partial Q}{Q}\right)\left(\frac{\partial Q}{Q}\right) \equiv_7 2S^2 + 4\partial S \\
\frac{\partial^3 Q}{Q} &= \left(\partial + \frac{\partial Q}{Q}\right)\left(\frac{\partial^2 Q}{Q}\right) \equiv_7 S^3 + 6S\partial S + 4\partial^2 S
\end{align}

For a general way to express $\frac{\partial^n Q}{Q}$ in terms of $S$, see the proof of Theorem $3.2$. Plugging these values into $(34)$ gives us that:

\begin{equation}
S^3 \equiv_7 S^2 + S\partial S + 2S + 2\partial S + 3\partial^2 S,
\end{equation}

which itself represents a congruence involving divisor function convolutions (but we will worry about that later).

Next, by the same reasoning as in the progression of equations $(35)$, $(36)$, $(37)$, one finds that:
\begin{align}
\frac{\partial P}{P} &= S \\
\frac{\partial^2 P}{P} &= \left(\partial + \frac{\partial P}{P}\right)\left(\frac{\partial P}{P}\right) = S^2 + \partial S \\
\frac{\partial^3 P}{P} &= \left(\partial + \frac{\partial P}{P}\right)\left(\frac{\partial^2 P}{P}\right) = S^3 + 3S\partial S + \partial^2 S
\end{align}

Plugging relations $(38)$, $(39)$ and $(40)$ into $(41)$ eventually leads to:

\begin{equation}
\left(\partial^3 - \partial^2 - \partial\right) P = (4\partial + 1)(P\partial S)
\end{equation}

Since $4 \cdot 5 + 1 \equiv_7 0$, all powers of $X$ with exponents congruent to $5$ modulo $7$ are canceled out in the RHS of $(42)$. However, the polynomial $\partial^3 - \partial^2 - \partial$ in the LHS does not cancel out these exponents, since $5^3 - 5^2 - 5 \equiv_7 4$. We conclude that all such exponents have coefficients divisible by $7$ in the power series $P(X)$. Equivalently, $p(7n+5) \equiv_7 0$, for all (nonnegative) integers $n$.

\end{proof}

\begin{remark}
There is no simple way to express equation $(31)$ using the standard differentiation operator, because the residues $0$, $1$ and $3$ are not consecutive. Even if we split $\partial(\partial - 1)(\partial - 3)$ into a sum of polynomials that arise from standard differentiation, e.g. $\partial(\partial - 1)(\partial - 2) - \partial(\partial - 1)$, these two polynomials correspond to different orders of differentiation. Since each differentiation subtracts $1$ from the exponent of each power of $X$, the resulting exponents would be shifted, so we cannot equate $(\partial(\partial - 1)(\partial - 2) - \partial(\partial - 1))(F) \equiv_7 0$ to $F^{(3)} - F^{(2)} \equiv_7 0$, given a power series $F$. Indeed, writing equation $(31)$ properly in terms of the usual differentiation operator would require some multiplications by powers of $X$ to make up for the shifted exponents, which would significantly complicate our equations and make an algorithmic approach much more difficult.
\end{remark}

If we tried to prove other similar congruences by hand, the differential algebraic computations would become too convoluted. Luckily, the methods illustrated in this section (with slight variations) are algorithmizable. 

\section{Method and Structure of the Algorithms}

Suppose we want to find congruences modulo a prime $q > 3$. As specified in the introduction, our methods are based on exploiting relations $(4)$ and $(5)$ modulo $q$.  The following lemma is helpful:

\begin{lemma}
Given a polynomial $f \in (\mathbb{Z}/q\mathbb{Z})[X]$ of degree $2$, the image $f(\mathbb{Z}/q\mathbb{Z})$ contains exactly $\frac{q+1}{2}$ elements.
\end{lemma}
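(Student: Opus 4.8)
The plan is to reduce the problem to counting squares in the field $\mathbb{Z}/q\mathbb{Z}$, exploiting that $q > 3$ is prime, so $\mathbb{Z}/q\mathbb{Z}$ is a field of odd characteristic in which both $2$ and any nonzero leading coefficient are invertible. I would write $f(X) = aX^2 + bX + c$ with $a \neq 0$, and since $2a$ is a unit, complete the square to obtain
\[
f(X) = a\left(X + \frac{b}{2a}\right)^2 + \left(c - \frac{b^2}{4a}\right).
\]
The translation $X \mapsto X + \frac{b}{2a}$ is a bijection of $\mathbb{Z}/q\mathbb{Z}$, and $Y \mapsto aY + (c - b^2/(4a))$ is an affine bijection (as $a \neq 0$); composing these shows that $|f(\mathbb{Z}/q\mathbb{Z})|$ equals the cardinality of the image of the squaring map $g(X) = X^2$.

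It then remains to count the squares in $\mathbb{Z}/q\mathbb{Z}$. I would argue that $g$ is exactly two-to-one on the nonzero elements: if $g(x) = g(y)$ with $x, y \neq 0$, then $x^2 - y^2 = (x-y)(x+y) = 0$, so $x = \pm y$, and since $q$ is odd we have $x \neq -x$ for $x \neq 0$, yielding exactly two preimages. Hence the $q - 1$ nonzero elements map onto $(q-1)/2$ distinct nonzero values, and adjoining $g(0) = 0$ gives an image of size $(q-1)/2 + 1 = (q+1)/2$. Combined with the affine reduction above, this proves $|f(\mathbb{Z}/q\mathbb{Z})| = \frac{q+1}{2}$.

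The only places where the hypotheses genuinely enter—and the steps I would take most care over—are the invertibility of $2$ (needed to complete the square) and the fact that $-1 \neq 1$ (needed for the two-to-one count); both hold precisely because $q$ is an odd prime, which is ensured by $q > 3$. I do not expect a substantive obstacle beyond this bookkeeping: the essential content is the classical enumeration of quadratic residues, merely repackaged through the affine change of variables.
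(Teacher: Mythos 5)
Your proof is correct and takes essentially the same approach as the paper: both reduce $f$ by an affine change of variables and then count collisions of the resulting quadratic map, arriving at one single preimage plus $\frac{q-1}{2}$ pairs. The only cosmetic difference is that you complete the square fully to reduce to $X \mapsto X^2$ and invoke the classical count of quadratic residues, whereas the paper stops the normalization at $X^2 + \hat{b}X$ and pairs elements as $\{\hat{m}, -\hat{b}-\hat{m}\}$ directly.
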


\begin{proof}
Write $f(X) = \hat{a}X^2 + \hat{b}X + \hat{c}$ with $\hat{a} \neq \hat{0}$ (here, hats denote equivalence classes modulo $q$). Assume WLOG that $\hat{c} = \hat{0}$, $\hat{a} = \hat{1}$ (translations and scalings won't affect the cardinality of $f(\mathbb{Z}/q\mathbb{Z})$ since $\mathbb{Z}/q\mathbb{Z}$ is a field), so $f(X) = X^2 + \hat{b}X$. Now if $f(\hat{m}) = f(\hat{n})$ for some $m, n \in \mathbb{Z}$, we get:

\begin{equation}
\hat{b}(\hat{n} - \hat{m}) = \hat{m}^2 - \hat{n}^2 = (\hat{m} + \hat{n})(\hat{m} - \hat{n})
\end{equation}

Thus either $\hat{m} = \hat{n}$, or we can divide by $\hat{m} - \hat{n}$ to get:

\begin{equation}
\hat{m} + \hat{n} = -\hat{b}
\end{equation}

Note that the conditions $\hat{m} = \hat{n}$ and $\hat{m} + \hat{n} = -\hat{b}$ occur simultaneously for exactly one value of $\hat{m}$, i.e. $\hat{m} = \frac{-\hat{b}}{\hat{2}}$. (we can divide by $\hat{2}$ since $2 \not\divides q$). The other elements of $\mathbb{Z}/q\mathbb{Z}$ can be partitioned into pairs $\{\hat{m}, - \hat{b} - \hat{m}\}$, and each pair is mapped to a distinct value by $f$. Thus there are $1 + \frac{q-1}{2} = \frac{q+1}{2}$ elements in the image $f(\mathbb{Z}/q\mathbb{Z})$.
\end{proof}

\begin{corollary}
There are exactly $\frac{q+1}{2}$, respectively $\frac{q-1}{2}$ residues $r \in \{0, \ldots, q-1\}$ such that:

\begin{equation}
\left[X^n\right](E(X)) \not\equiv_q 0, \qquad \text{for some } n \equiv_q r,
\end{equation}

respectively:
\begin{equation}
\left[X^n\right](J(X)) \not\equiv_q 0, \qquad \text{for some } n \equiv_q r
\end{equation}
\end{corollary}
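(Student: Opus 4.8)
The plan is to reduce both counts to statements about images of explicit degree-$2$ polynomials over the field $\mathbb{Z}/q\mathbb{Z}$, which I can then read off from Lemma 3.1. The enabling observation is that neither $E(X)$ nor $J(X)$ exhibits any cancellation: the generalized pentagonal numbers $\tfrac{1}{2}m(3m+1)$ $(m \in \mathbb{Z})$ and the triangular numbers $\tfrac{1}{2}m(m+1)$ $(m \geq 0)$ are strictly monotone in their index $m$, hence all distinct, so the coefficient of $X^n$ is exactly the single term coming from the unique index (if any) producing that exponent. Consequently, deciding whether a residue $r$ is admissible amounts to deciding which values $\tfrac{1}{2}m(3m+1)$ (resp. $\tfrac{1}{2}m(m+1)$) can be congruent to $r$ modulo $q$ while carrying a coefficient that survives reduction mod $q$.

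For $E(X)$ the coefficients are $(-1)^m = \pm 1$, never $\equiv_q 0$ since $q > 3$. Thus $r$ is admissible precisely when $r$ lies in the image of the map $m \mapsto \tfrac{1}{2}m(3m+1) \bmod q$. First I would check that this map factors through $\mathbb{Z}/q\mathbb{Z}$: because $q$ is odd, $\tfrac{1}{2}$ and $\tfrac{3}{2}$ make sense in $\mathbb{Z}/q\mathbb{Z}$, and from $2\cdot\tfrac{1}{2}m(3m+1) = 3m^2 + m$ one sees that the reduced value depends only on $m \bmod q$, agreeing with the genuine polynomial $f(X) = \tfrac{3}{2}X^2 + \tfrac{1}{2}X$ over the field. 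As $m$ ranges over all of $\mathbb{Z}$, the admissible residues form exactly $f(\mathbb{Z}/q\mathbb{Z})$; since $\deg f = 2$ (leading coefficient $\tfrac{3}{2} \neq 0$), Lemma 3.1 yields $|f(\mathbb{Z}/q\mathbb{Z})| = \tfrac{q+1}{2}$, proving the first count.

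For $J(X)$ the same scheme applies to $g(X) = \tfrac{1}{2}X^2 + \tfrac{1}{2}X$, with one decisive twist: the coefficient $(-1)^m(2m+1)$ now does vanish mod $q$, precisely when $m \equiv_q m_0 := -\tfrac{1}{2}$. Hence $r$ is admissible for $J$ iff $r = g(\bar m)$ for some $\bar m \in \mathbb{Z}/q\mathbb{Z}$ with $\bar m \neq m_0$. The crux, and the step I expect to be the main obstacle, is to recognize that $m_0$ is exactly the fixed point of the symmetry of $g$. Running the computation in the proof of Lemma 3.1 for $g$, one finds $g(\bar m) = g(\bar n)$ iff $\bar m = \bar n$ or $\bar m + \bar n = -1$, so the unique residue with a singleton fibre is the self-paired one, $\bar m = -1 - \bar m$, i.e. $\bar m = -\tfrac{1}{2} = m_0$. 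Therefore $g(m_0)$ is the one image value attained only at $m_0$, whereas every other value is attained at a genuine pair $\{\bar m, -1-\bar m\}$ avoiding $m_0$ (for if $-1-\bar m = m_0$ then $\bar m = m_0$). Deleting $m_0$ from the domain thus removes exactly one element from $g(\mathbb{Z}/q\mathbb{Z})$, whose cardinality is $\tfrac{q+1}{2}$ by Lemma 3.1; the number of admissible residues for $J$ is therefore $\tfrac{q+1}{2} - 1 = \tfrac{q-1}{2}$, as claimed.
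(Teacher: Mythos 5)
Your proof is correct and takes essentially the same route as the paper: both counts are reduced via Lemma 3.1 to the images of the quadratics $\tfrac{3}{2}X^2+\tfrac{1}{2}X$ and $\tfrac{1}{2}X^2+\tfrac{1}{2}X$ over $\mathbb{Z}/q\mathbb{Z}$, and for $J$ one observes that the index annihilated by the factor $2m+1$, namely $m\equiv_q -\tfrac{1}{2}$, is precisely the unique singleton fibre, so exactly one image value is lost. (One cosmetic point: the generalized pentagonal numbers are not monotone in $m\in\mathbb{Z}$, though they are pairwise distinct --- which is all your no-cancellation argument actually uses, and which you can get from $(m-n)(3(m+n)+1)=0$.)
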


\begin{proof}
The first statement follows by considering $f(X) := \frac{X(\hat{3}X+\hat{1})}{\hat{2}}$, in light of Lemma $3.1$ and relation $(4)$; note that $f$ has degree $2$ because $q \neq 3$. For the second statement, consider the polynomial $f(X) := \frac{X(X+\hat{1})}{\hat{2}}$. By Lemma $3.1$, there are $\frac{q+1}{2}$ elements in $f(\mathbb{Z}/q\mathbb{Z})$, among which the class $\frac{-\hat{1}}{\hat{8}}$ is the image of exactly one element: $\frac{-\hat{1}}{\hat{2}}$. However, for $n \equiv_q \frac{q-1}{2}$, we have $2n + 1 \equiv_q 0$, so $[X^n](J(X)) \equiv_q 0$ by $(5)$. This leaves only $\frac{q+1}{2} - 1 = \frac{q-1}{2}$ residues modulo $q$ that are covered by the exponents in $J(X)$ corresponding to nonzero coefficients, which completes our proof.
\end{proof}

\begin{remark}
Corollary $3.1$ can be understood as a set of congruences of the form $p^{*k}(qn + r) \equiv_q 0$; to be more precise, the corollary reveals $q - \frac{q+1}{2} = \frac{q-1}{2}$ such congruences for $k = -1$ and $q - \frac{q-1}{2} = \frac{q+1}{2}$ such congruences for $k = -3$, as in the following example:
\end{remark}

\begin{example}
For $q = 5$, one manually checks that relation $(45)$ holds only for $r \in \{0, 1, 2\}$, whereas relation $(46)$ holds only for $r \in \{0, 1\}$. Thus, one has:
\begin{align}
[X^n](E(X)) &\equiv_5 0, \qquad \forall n \equiv_5 3, 4 \\
[X^n](J(X)) &\equiv_5 0, \qquad \forall n \equiv_5 2, 3, 4
\end{align}

Since $E = P^{-1}$ and $J = P^{-3}$, these relations can be translated into Ramanujan-type congruences:
\begin{align}
p^{*-1}(5n + r) &\equiv_5 0, \qquad \text{for } r \in \{3, 4\}, \forall n \\
p^{*-3}(5n + r) &\equiv_5 0, \qquad \text{for } r \in \{2, 3, 4\}, \forall n 
\end{align}

\end{example}

In fact, due to the following lemma, the congruences above have immediate generalizations:

\begin{lemma}
Suppose $f : \mathbb{Z}_{\geq 0} \to \mathbb{Z}$ satisfies $f^{*k}(qn + r) \equiv_q 0$ for all $n \geq 0$, where $k\in \mathbb{Z}$ and $r \in \{0, \ldots, q-1\}$ are given. Then one can replace $k$ by any $k' \equiv_q k$ and the congruence will still hold, provided $f^{*k'}$ is defined. That is, adding a multiple of $q$ to $k$ preserves the congruence.
\end{lemma}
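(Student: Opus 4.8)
The plan is to pass to generating functions and exploit the Frobenius endomorphism modulo $q$. Let $F(X) := \sum_{n \geq 0} f(n) X^n$, so that the generating function of $f^{*k}$ is $F^k$ (exactly as in relations $(10)$ and $(11)$), and the hypothesis $f^{*k}(qn+r) \equiv_q 0$ says precisely that every coefficient $[X^N](F^k)$ with $N \equiv_q r$ vanishes modulo $q$. It suffices to treat the two elementary steps $k' = k + q$ and $k' = k - q$, since any $k' \equiv_q k$ is reached from $k$ by finitely many such steps; moreover the case $k' = k - q$ is only claimed when $f^{*k'}$ is defined, i.e. when $F$ is invertible as a formal power series (so that $F^{-1}$, and hence $F^{k'}$, reduces to a well-defined series modulo $q$).

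The key observation is the \emph{freshman's dream} in characteristic $q$: from the multinomial theorem (all multinomial coefficients except the pure ones are divisible by $q$) together with Fermat's little theorem $f(n)^q \equiv_q f(n)$, one gets $F(X)^q \equiv_q \sum_{n \geq 0} f(n) X^{qn} = F(X^q)$. Crucially, $F(X^q)$ is supported only on exponents divisible by $q$, and the same is true of its inverse $F(X^q)^{-1}$ whenever that exists, since inverting a power series in the single variable $X^q$ again produces a power series in $X^q$. Thus, writing $G(X) := F(X^q)^{\pm 1}$ in the two respective cases, we have $F^{k \pm q} \equiv_q F^k \cdot G$, where $G$ has nonzero coefficients only at exponents $\equiv_q 0$.

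It then remains to check that multiplication by such a $G$ preserves the vanishing of coefficients in the residue class $r$. Indeed, for any $N$,
\[
[X^N]\left(F^k \cdot G\right) = \sum_{a + b = N} [X^a]\!\left(F^k\right)\,[X^b](G),
\]
and the only surviving terms have $b \equiv_q 0$, hence $a \equiv_q N$. If $N \equiv_q r$, then every such $a$ satisfies $a \equiv_q r$, so each factor $[X^a](F^k)$ is $\equiv_q 0$ by hypothesis; therefore $[X^N]\!\left(F^{k \pm q}\right) \equiv_q 0$. This establishes the two basic steps, and iterating them yields the congruence for all $k' \equiv_q k$ for which $f^{*k'}$ is defined.

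I do not anticipate a serious obstacle here; the one point requiring a little care is the case $k' < k$, where I must justify both the existence of $F(X^q)^{-1}$ modulo $q$ — guaranteed by the proviso that $f^{*k'}$ be defined, which forces $f(0)$ to be invertible — and the fact that this inverse remains supported on multiples of $q$. Both facts follow cleanly from viewing $F(X^q)$ as a power series in the formal variable $X^q$, so the multiplication argument applies uniformly to the $+q$ and $-q$ steps.
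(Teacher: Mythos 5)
Your proof is correct, but it is not the proof the paper gives --- in fact it is precisely the ``classical proof'' that the author explicitly mentions and then deliberately sets aside (``A classical proof of this lemma would continue to study the coefficients of $G^q$ modulo $q$ (see \emph{Freshman's Dream}). However, to introduce a method that will be essential to our results later, we give a different proof here.''). You work directly on coefficients: $F^q \equiv_q F(X^q)$ by the multinomial theorem and Fermat's little theorem, the factor $F(X^q)^{\pm 1}$ is supported on exponents divisible by $q$, and convolving with such a series preserves vanishing on the residue class $r$; your handling of the $k' = k - q$ step (invertibility of $F(X^q)$ mod $q$ and the support of its inverse) is careful and sound. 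The paper instead first rephrases the hypothesis as the differential equation $\bigl(\prod_{0 \leq s < q,\, s \neq r} (\partial - \hat{s})\bigr)(F^k) = \hat{0}$ over $\mathbb{Z}/q\mathbb{Z}$, and then observes that $\partial(H G^q) = (\partial H)G^q$ in characteristic $q$, so the factor $G^q := F^{(k'-k)}$ (a $q$-th power) passes through any polynomial in $\partial$; multiplying the differential equation by $G^q$ gives the same equation for $F^{k'}$. Your route is more elementary and self-contained, needing no differential-algebra formalism. The paper's route buys the key mechanism reused throughout: the translation between Ramanujan-type congruences and operators $\prod(\partial - \hat{s})$, and the fact that $q$-th-power factors commute with such operators, both of which underpin the target relations of Section 4 and the $P \mapsto E^{q-1}$ conversion described after the figure in Section 3. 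No gap to report; just be aware that the paper's choice of proof is a deliberate pedagogical setup rather than the shortest available argument.
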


\begin{proof}
Consider the formal power series $F(X) := \sum_{n \geq 0} f(n)X^n$. Then one can write $F^{k'} = F^k G^q$, where $G := F^{\frac{1}{q}(k'-k)}$. A classical proof of this lemma would continue to study the coefficients of $G^q$ modulo $q$ (see \emph{Freshman's Dream}). However, to introduce a method that will be essential to our results later, we give a different proof here. The congruence $f^{*k}(qn + r) \equiv_q 0$ can be rephrased as the following differential equation over $\mathbb{Z}/q\mathbb{Z}$:

\begin{equation}
\left(\prod_{\substack{0 \leq s < q \\ s \neq r}} (\partial - \hat{s})\right) \left(F^{k}\right) = \hat{0},
\end{equation}

where it is understood that the coefficients of $F^k$ are taken modulo $q$. Indeed, if $n \not\equiv_q r$, then the coefficient of $X^n$ in the LHS of $(51)$ (when applied to $X$) must be divisible by $q$ because the polynomial $\prod_{\substack{0 \leq s < q \\ s \neq r}} (n - s)$ cancels modulo $q$; recall the meaning of the derivation $\partial$ from the introduction. Thus $(51)$ is equivalent to the fact that for $n \equiv_q r$, the coefficient of $X^n$ in the LHS is divisible by $q$. Since $\prod_{\substack{0 \leq s < q \\ s \neq r}} (n - s) \not\equiv_q 0$ in this case, the latter is in its turn equivalent to $[X^n](F^k(X))$ being divisible by $q$ for all $n \equiv_q r$, which is precisely our initial congruence. Now note that given any formal power series $H$:

\begin{equation}
\partial \left(H G^q \right) = (\partial H) G^q + H\partial\left(G^q\right) = (\partial H)G^q \quad \text{(in $\mathbb{Z}/q\mathbb{Z}$)}
\end{equation}

So the factor $G^q$ jumps over the derivation $\partial$, and thus over any polynomial in this derivation. Since $F^{k'} = F^k G^q$, multiplying $(51)$ by $G^q$ yields that:

\begin{equation}
\left(\prod_{\substack{0 \leq s < q \\ s \neq r}} (\partial - \hat{s})\right) \left(F^{k'}\right) = \hat{0},
\end{equation}

which is equivalent to the alleged congruence $f^{*k'}(qn + r) \equiv_q 0$.
\end{proof}

Returning to Example $3.1$ and the prior remark, Lemma $3.2$ shows that congruences $(49)$ and $(50)$, together with their analogues for any prime $q > 3$, hold if the convolution exponents $-1$ and $-3$ are replaced by any $k \equiv_q -1$, respectively $k \equiv_q -3$. The resulting relations, along with the trivial congruences obtained for $k \equiv_q 0$ (which follow because $E^0 = 1$), are sometimes called \emph{non-exceptional} \cite{Lazarev}. In contrast, all other congruences of the form $p^{*k}(qn + r) \equiv_q 0$, such as the ones shown in the table from the introduction, are called \emph{exceptional} and given more importance.

In order to find and prove the exceptional Ramanujan-type congruences, as well as more general types of results, we turn to the following theorem:

\begin{theorem}
There exist two polynomial differential equations $B_1(E) = \hat{0}$ and $B_2(E) = \hat{0}$ over the field $\mathbb{Z}/q\mathbb{Z}$, derived from relations $(4)$ and respectively $(5)$, such that:
\begin{itemize}
\item Each monomial in $B_1$ has degree $1$ and at most $\frac{q+1}{2}$ total applications of the derivation $\partial$.
\item Each monomial in $B_2$ has degree $3$ and at most $\frac{q-1}{2}$ total applications of the derivation $\partial$.
\end{itemize}
In other words, both of these equations have relatively simple forms, ideal for performing intricate machine computations. We will refer to them as the \emph{base relations} of our method, comprising in some sense the input of our algorithms. When we say \emph{polynomial differential equations}, we mean that $B_1$ and $B_2$ are multivariate polynomials in $E$, $\partial E$, $\partial^2 E$, etc., and the degrees of the monomials in $B_1$ and $B_2$ are understood in this sense (e.g., $E \partial^2 E$ has degree 2).
\end{theorem}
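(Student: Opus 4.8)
The plan is to construct $B_1$ and $B_2$ explicitly as ``residue-annihilator'' differential operators applied to $E$ and to $E^3$, generalizing word-for-word the constructions that produced equations $(21)$ and $(31)$ in the illustrative proofs of Section $2$. The starting point is Corollary $3.1$: the series $E$ has coefficients $\not\equiv_q 0$ only at exponents lying in a set $R_E$ of $\frac{q+1}{2}$ residue classes modulo $q$ (the image of $x \mapsto \frac{x(3x+1)}{2}$ over $\mathbb{Z}/q\mathbb{Z}$), while $J = E^3$ has coefficients $\not\equiv_q 0$ only at exponents in a set $R_J$ of $\frac{q-1}{2}$ residue classes (the image of $x \mapsto \frac{x(x+1)}{2}$ with the class $-\frac{1}{8}$ removed, since there $2n+1 \equiv_q 0$).

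Given these residue sets, I would define
\[
B_1(E) := \left(\prod_{s \in R_E}(\partial - \hat{s})\right)(E), \qquad B_2(E) := \left(\prod_{s \in R_J}(\partial - \hat{s})\right)\left(E^3\right).
\]
The first key step is to verify that both series vanish modulo $q$, using the same annihilation argument as in Section $2$ (and as in the proof of Lemma $3.2$). For any exponent $n$ with $(n \bmod q) \in R_E$, one of the factors $\partial - \hat{s}$ contributes the scalar $n - s \equiv_q 0$ when acting on $X^n$, killing that power; for $(n \bmod q) \notin R_E$, the coefficient $[X^n]E$ is already $\equiv_q 0$ by the definition of $R_E$. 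Hence $B_1(E) \equiv_q \hat{0}$, and the identical reasoning applied to $E^3 = J$ and $R_J$ gives $B_2(E) \equiv_q \hat{0}$.

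The second key step is the structural bound. Since $\partial$ is a derivation, it preserves the total degree of any homogeneous differential monomial; thus applying an arbitrary polynomial in $\partial$ to $E$ (degree $1$) produces only degree-$1$ monomials, and applying one to $E^3$ (degree $3$) produces only degree-$3$ monomials once we expand via the Leibniz rule. The bound on the number of applications of $\partial$ is then simply the number of factors in each product: $|R_E| = \frac{q+1}{2}$ for $B_1$ and $|R_J| = \frac{q-1}{2}$ for $B_2$, matching the claimed orders, with the top-order contributions $\partial^{(q+1)/2} E$ and the Leibniz expansion of $\partial^{(q-1)/2}\!\left(E^3\right)$ showing the bounds are tight.

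There is no deep obstacle here: the theorem is essentially the uniform packaging of the ad hoc relations used in Section $2$. The only point requiring care — more bookkeeping than difficulty — is pinning down $R_E$ and $R_J$ precisely enough to guarantee the cardinalities $\frac{q+1}{2}$ and $\frac{q-1}{2}$, which is exactly the content of Lemma $3.1$ and Corollary $3.1$ and which I would invoke rather than reprove. I would also remark that, as differential polynomials, $B_1$ and $B_2$ are genuinely nontrivial: their leading terms have coefficient $\hat{1}$, so $B_1(E) = \hat{0}$ and $B_2(E) = \hat{0}$ encode relations $(4)$ and $(5)$ in a nonvacuous form, suitable as the ``base relations'' feeding the Gr\"obner-basis computations of the subsequent sections.
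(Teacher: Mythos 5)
Your proposal is correct and follows essentially the same route as the paper: the paper likewise rephrases Corollary $3.1$ as the annihilation identities $\bigl(\prod_j (\partial - \hat{r_j})\bigr)E = \hat{0}$ and $\bigl(\prod_j (\partial - \hat{s_j})\bigr)J = \hat{0}$ over the reached residue classes, substitutes $J = E^3$, and notes that the degree and $\partial$-order bounds follow by inspection. Your write-up merely makes explicit the annihilation and Leibniz-expansion details that the paper leaves as "easy to verify."
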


\begin{proof}
A way of rephrasing Corollary $3.1$, following the same idea as in Lemma $3.2$, is to say that there exist residues $r_1, \ldots, r_{\frac{1}{2}(q+1)}$ and $s_1, \ldots, s_{\frac{1}{2}(q-1)}$ such that in $\mathbb{Z}/q\mathbb{Z}$:

\begin{equation}
\left(\prod_{j = 1}^{\frac{1}{2}(q+1)} (\partial - \hat{r_j})\right) E = \hat{0}
\end{equation}

\begin{equation}
\left(\prod_{j = 1}^{\frac{1}{2}(q-1)} (\partial - \hat{s_j})\right) J = \hat{0}
\end{equation}

The two relations above describe the desired polynomial differential equations, after substituting $J = E^{3}$. The restrictions from the hypothesis of the theorem are easy to verify.
\end{proof}

\begin{remark}
If we allowed the prime $q$ to be equal to $3$, then equation $(55)$ would provide no valuable information. Indeed, since $J = E^3$, we would have $\partial J = \hat{0}$, making equation $(55)$ trivial.
\end{remark}

\begin{example}
In continuation of Example $3.1$, relations $(54)$ and $(55)$ are given when $q = 5$ by:

\begin{equation}
\begin{cases}
\partial(\partial - \hat{1})(\partial - \hat{2}) E = \hat{0} \\
\partial(\partial - \hat{1}) J = \hat{0},
\end{cases}
\end{equation}

which after simplification become:

\begin{equation}
\begin{cases}
\partial^3 E + 2\partial^2 E + 2\partial E = \hat{0} \\
(\partial E)^2 + \hat{3}E \partial^2 E + \hat{2}E \partial E = \hat{0}
\end{cases}
\end{equation}

We note that the second equation has been divided by $E$ (which is okay since $E$ is not the zero power series). In this particular case, it turns out that the first equation in $(57)$ can be deduced from the second one, but in general the two equations give different information.
\end{example}

\begin{implementation}
The MAPLE code that produces the differential equations from Theorem $3.1$, given the information about residues from Corollary $3.1$ in the form of a boolean array called \emph{reached}, is found within the \emph{buildEquation} procedure from the file at \cite{Maple1}. The resulting so-called \emph{base relations} are denoted in the program by the variables \emph{baseE} and \emph{baseJ}. The value of $q$ is declared at the very beginning of the program.
\end{implementation}

\begin{remark}
In the Differential Algebra package of MAPLE, the application of a derivation $t$ to a variable $F$ is denoted by $F[t]$. Multiple consecutive applications of the same derivation to $F$ are denoted by $F[t, t, \ldots, t]$. For more information on specific syntax, see:

\begin{center}
\url{https://www.maplesoft.com/support/help/maple/view.aspx?path=DifferentialAlgebra}
\end{center}
\end{remark}

While Theorem $3.1$ provides two polynomial differential equations in the "variable" $E$, which will help prove congruences for multipartitions, the following theorem provides differential equations in the "variable" $S$, which will help prove congruences for convolutions of the divisor function:

\begin{theorem}
There exist two polynomial differential equations $B_3(S) = \hat{0}$ and $B_4(S) = \hat{0}$ over the field $\mathbb{Z}/q\mathbb{Z}$, derived from relations $(4)$ and respectively $(5)$, such that:
\begin{itemize}
\item Each monomial in $B_3$ has degree at most $\frac{q+1}{2}$ and at most $\frac{q-1}{2}$ total applications of the derivation $\partial$.
\item Each monomial in $B_4$ has degree at most $\frac{q-1}{2}$ and at most $\frac{q-3}{2}$ total applications of the derivation $\partial$.
\end{itemize}

As anticipated above, these equations will play the role of \emph{base relations} in our computations when we try prove congruences for divisor function convolutions. Note that these base relations can get significantly more complicated than the ones from Theorem $3.1$ (due to weaker restrictions), which is why obtaining congruences for multipartitions will generally be more efficient.
\end{theorem}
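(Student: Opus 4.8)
The plan is to convert the two linear base relations of Theorem $3.1$ into polynomial differential equations in the single series $S$, applying in general the maneuver used to pass from $(34)$ to $(38)$ in the proof of Theorem $2.2$. Expanding the products in $(54)$ and $(55)$, I would write relation $(54)$ as $\big(\sum_{n=0}^{(q+1)/2} c_n\,\partial^n\big)E = \hat 0$ and relation $(55)$ as $\big(\sum_{n=0}^{(q-1)/2} d_n\,\partial^n\big)J = \hat 0$, where the $c_n$ and $d_n$ are, up to sign, the elementary symmetric functions of the residues $\hat r_j$ and $\hat s_j$. Because $P$ has constant term $1$, both $E = P^{-1}$ and $J = P^{-3}$ are invertible power series over $\mathbb{Z}/q\mathbb{Z}$, so I may legitimately divide these two equations by $E$ and by $J$ respectively; the task then reduces to expressing each $\frac{\partial^n E}{E}$ and each $\frac{\partial^n J}{J}$ as a polynomial in $S$ and its $\partial$-derivatives, and controlling the size of that polynomial.

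For the logarithmic-derivative data, relation $(1)$ together with $(4)$ and $(5)$ gives $\frac{\partial E}{E} = -S$ and $\frac{\partial J}{J} = -3S$. For any invertible series $Q$, writing $u_n := \frac{\partial^n Q}{Q}$ and differentiating the identity $\partial^n Q = u_n Q$ yields the recursion
\[
u_{n+1} = \partial u_n + \frac{\partial Q}{Q}\,u_n, \qquad u_0 = \hat 1,
\]
which is exactly the pattern exhibited in $(35)$–$(37)$ and $(39)$–$(41)$. In both of our cases $\frac{\partial Q}{Q}$ is a scalar multiple of $S$, so an immediate induction shows that each $u_n$ is a polynomial in $S, \partial S, \dots, \partial^{n-1}S$.

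The crux is a grading argument that pins down the size of these polynomials; this is the step I expect to require the most care to state cleanly. I would assign to a monomial $\partial^{j_1}S \cdots \partial^{j_m}S$ two independent weights: its \emph{degree} $m$ and its \emph{order} $j_1 + \dots + j_m$ (the total number of applied $\partial$'s). In the recursion, applying $\partial$ acts by the Leibniz rule and so fixes the degree while raising the order by $1$, whereas multiplying by the scalar multiple of $S$ raises the degree by $1$ while fixing the order. Since $u_1$ has degree $1$ and order $0$, induction gives that every monomial of $u_n$ satisfies $(\text{degree}) + (\text{order}) = n$ with degree $\geq 1$; hence its degree is at most $n$ and its order is at most $n-1$. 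One checks this against $(39)$–$(41)$: every monomial of $\frac{\partial^n P}{P}$ indeed has degree-plus-order equal to $n$.

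Finally I would read off the bounds. Setting $B_3 := \sum_n c_n \frac{\partial^n E}{E}$ with $n$ ranging up to $\frac{q+1}{2}$, the grading estimate yields a polynomial differential equation in $S$ of degree at most $\frac{q+1}{2}$ with at most $\frac{q-1}{2}$ applications of $\partial$; setting $B_4 := \sum_n d_n \frac{\partial^n J}{J}$ with $n$ up to $\frac{q-1}{2}$ yields degree at most $\frac{q-1}{2}$ and at most $\frac{q-3}{2}$ applications of $\partial$, as claimed (the constant term $u_0 = \hat 1$ sits harmlessly within these ranges). Once the two additive gradings are seen to be respected by the recursion, these bounds are automatic, so the entire difficulty is concentrated in the grading induction.
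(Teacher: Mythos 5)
Your proposal is correct and follows essentially the same route as the paper: divide the expanded forms of $(54)$ and $(55)$ by $E$ and $J$, and use the recursion $\frac{\partial^{n+1}Q}{Q} = (\partial + kS)\left(\frac{\partial^n Q}{Q}\right)$ with $k = -1$ and $k = -3$ to express everything as a differential polynomial in $S$. Your degree-plus-order grading induction is simply an explicit write-up of the step the paper dismisses as ``easy to verify,'' and it is carried out correctly.
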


\begin{proof}
Suppose $Q = P^k$ for some integer $k$ (e.g., $Q = P$ if $k = 1$, $Q = E$ if $k = -1$, and $Q = J$ if $k = -3$). By the properties of derivations, equation $(1)$ implies that:

\begin{equation}
\frac{\partial Q}{Q} = \frac{kP^{k-1}\partial P}{P^k} = kS
\end{equation}

Our goal is to express any $\frac{\partial^n Q}{Q}$, for any power $n \geq 1$, as a differential polynomial of $S$. The case $n = 1$ is given by the equation above, and the inductive step follows from the observation that:

\begin{equation}
\begin{split}
\frac{\partial^{n+1} Q}{Q}
&= \frac{Q\partial^{n+1} Q - \partial^n Q \partial Q}{Q^2} + \frac{\partial^{n}Q \partial Q}{Q} \\
&= (\partial + kS)\left(\frac{\partial^n Q}{Q}\right)
\end{split}
\end{equation}

Thus in general, for any $n \geq 1$:

\begin{equation}
\frac{\partial^{n} Q}{Q} = (\partial + kS)^{n-1} (kS)
\end{equation}

As a side note, this is the same reasoning used in the proof of Theorem $2.2$ (see $(35)-(37)$, $(39)-(41)$). Consider now the equations $(54)$ and $(55)$, rewritten in the forms:
\begin{align}
\sum_{j = 1}^{\frac{q+1}{2}} \hat{c_j} \partial^j E &= \hat{0}, \\
\sum_{j = 1}^{\frac{q-1}{2}} \hat{d_j} \partial^j J &= \hat{0},
\end{align}

for suitable coefficients $c_j$ and $d_j$. We note that $j$ starts at $1$ because the polynomials in $\partial$ from $(54)$ and $(55)$ have no free coefficient; this occurs because $0$ is always among both the $r_j$'s and the $s_j$'s, since $\frac{0(3\cdot 0 + 1)}{2} = \frac{0(0 + 1)}{2} = 0$ (see the proof of Corollary $3.1$).

Now divide the equations above by $E = P^{-1}$, respectively $J = P^{-3}$ to get that:
\begin{align}
\sum_{j = 1}^{\frac{q+1}{2}} \hat{c_j} (\partial - S)^{j-1} (-S) &= \hat{0}, \\
\sum_{j = 1}^{\frac{q-1}{2}} \hat{d_j} (\partial - 3S)^{j-1} (-\hat{3}S) &= \hat{0},
\end{align}

which can be taken as the desired polynomial differential equations in $S$. The restrictions from the hypothesis of the theorem are easy to verify.
\end{proof}

\begin{remark}
If we allowed $q$ to be equal to $3$, relation $(64)$ would just tell us that $0 = 0$. Therefore, our algorithms do not work for $q = 3$ (they either crash or lead to trivial results).
\end{remark}

\begin{example}
In continuation of Examples $3.1$ and $3.2$ (where $q = 5$), the relations in $(57)$ are equivalent (by the method from the proof of Theorem $3.2$) to:

\begin{equation}
\begin{cases}
\hat{4}S^3 + \hat{2}S^2 + \hat{3}S\partial S + \hat{3}S + \hat{3}\partial S + \hat{4}\partial^2 S = \hat{0} \\
\hat{4}S^2 + \hat{3}S + \hat{2}\partial S = \hat{0}
\end{cases}
\end{equation}
\end{example}

\begin{implementation}
The MAPLE code that determines the base relations from Theorem $3.2$, given the residues from Corollary $3.1$ in the form of a boolean array called \emph{reached}, is found within the \emph{buildEquation} procedure from the file at \cite{Maple2}. Once again, the resulting relations are denoted in the program by the variables \emph{baseE} and \emph{baseJ}, and the value of $q$ is declared at the very top.
\end{implementation}

Now that we have acquired a sufficient set of base relations, it is time to see how these can be combined to produce a predetermined so-called \emph{target relation}. The specific form of the target relation will depend on the type of congruence that we want to prove (see the next section for details); all we know for now is that it is a polynomial differential equation over $\mathbb{Z}/q\mathbb{Z}$, either $T(P) = \hat{0}$ if we are searching for a congruence of multipartitions, or $T(S) = \hat{0}$ if we are searching for a congruence of divisor function convolutions. The figure below illustrates the grand scheme of our algorithms:

\begin{center}
\includegraphics[scale = 0.9]{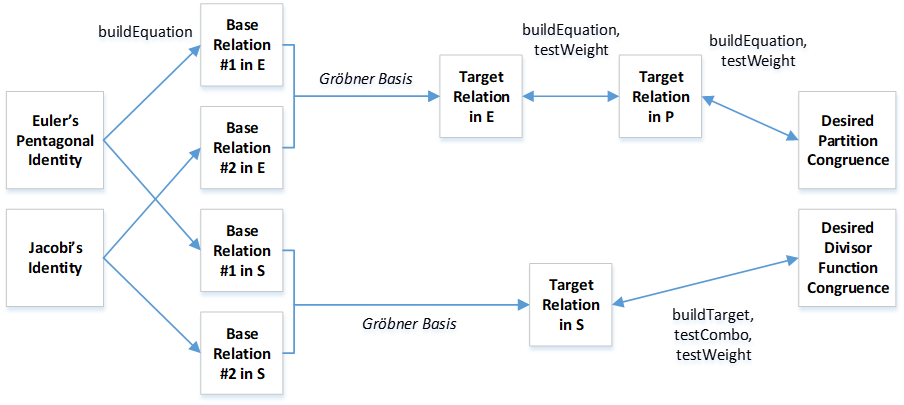}
\end{center}

\begin{implementation}
The text next to each arrow above indicates which MAPLE procedure from our code is used to perform that transition. More details on the procedures \emph{buildTarget}, \emph{testCombo} and \emph{testWeight} are given in the next section. The top chain of transitions from the figure (dedicated to partition congruences) corresponds to the program available at \cite{Maple1}, while the bottom chain (dedicated to divisor function congruences) corresponds to the program available at \cite{Maple2}.
\end{implementation}

\begin{remark}
The conversion of a polynomial differential equation in terms of $P$, say $T(P) = \hat{0}$, to one in terms of $E$, say $U(E) = \hat{0}$, can always be realized through the method further described. Suppose that the largest monomial in $T(P)$ (which is a product of $P$, $\partial P$, etc.) has degree $k$. Then one can multiply the relation $T(P) = \hat{0}$ by $E^{qk}$, which is the same as $P^{-qk}$. This way, each component $\left(\partial^j P\right)^n$ of each monomial in $T$ can absorb a factor of $P^{-qn}$, eventually leaving a leftover power of $E$ as a factor to each monomial. By the reasoning from the proof of Lemma $3.2$, the resulting component will become $\left(P^{-q} \partial^j P\right)^n = \left(\partial^j P^{1-q}\right)^n = \left(\partial^j E^{q-1}\right)^n$. The expression obtained in this way is clearly a differential polynomial in $E$, giving us the desired relation $U(E) = \hat{0}$. In particular, this reasoning shows that if all monomials in $T(P)$ have the same degree, one can replace $P$ by $E^{q-1}$ directly to obtain that $U(E) := T(E^{q-1}) = \hat{0}$. For computational reasons, the differential polynomial $U(E)$ should be simplified by the highest possible power of $E$ before proceeding.
\end{remark}

At this point, the only part of the general process that we have yet to explain (except for the specific methods of generating target relations, covered in the next section) is given by the "\emph{Gr\"obner basis}" transitions from the figure, which test whether the target relation lies in the differential ideal generated by the base relations and the derivation $\partial$. One might expect differential Gr\"obner bases to be more useful in this case, since we are dealing with differential polynomials in one variable, rather than multivariate ordinary polynomials. However, differential Gr\"obner bases have the disadvantage of being infinite in certain cases, and therefore not suitable for machine computations.

Moreover, algorithms like \emph{Rosenfeld-Gr\"obner} \cite{Rosenfeld} for testing membership to a differential ideal only work for radical fields, which is not our case. We can work around this problem and at the same time increase the expected efficiency of our algorithms by the following observation:

\begin{remark}
Any member of the differential ideal generated by the differential polynomials $B_1(E)$ and $B_2(E)$ (from Theorem $3.1$) and the derivation $\partial$ can be written as:

\begin{equation}
\sum_k C_k \partial^k B_1(E) + \sum_k D_k \partial^k B_2(E),
\end{equation}

where $C_k$ and $D_k$ are expressions from the differential ideal generated by $E$ and the derivation $\partial$. So to control the elements of this form, it suffices to control the values of $\partial^k B_1(E)$ and $\partial^k B_2(E)$. In general, these values get more and more complicated as $k$ grows, but we are lucky to use a special derivation which obeys the property further described.

By Fermat's Little Theorem, one can see that given any power series $F$ and prime $q$:

\begin{equation}
\partial^q F \equiv_q \partial F,
\end{equation}

because $n^q \equiv_q n$ for each integer $n$. Therefore, the set $\{\partial^k E : k \geq 0\}$ contains at most $q$ elements, which we can enumerate as $X_0 := E, \ldots, X_{q-1} := \partial^{q-1} E$. Furthermore, the relation $\partial^{q} B_1(E) = \hat{0}$ must be equivalent to the relation $\partial B_1(E) = \hat{0}$. In fact, the relation $\partial^{q-1} B_1(E) = 0$ must be equivalent to the relation $B_1(E) = 0$, because $B_1(E)$ itself is of the form $\partial f(E)$ for some differential polynomial $f$ (this is true because $0$ must be among the residues $r_k$ and $s_k$ from equations $(54)$ and $(55)$, as previously explained). Thus it suffices to consider all expressions of the form in $(66)$ when $0 \leq k \leq q-2$. However, the factors $C_k$ and $D_k$ can be any multivariate polynomials in $X_0, \ldots, X_{q-1}$, and the expressions $\partial^k B_1(E)$, $\partial^k B_2(E)$ are fixed multivariate polynomials in the same "variables". Therefore, the differential ideal we are searching for is equivalent to the ideal generated by the following subset of the ring of polynomials in $X_0, \ldots, X_{q-1}$ over $\mathbb{Z}/q\mathbb{Z}$:

\begin{equation}
\beta := \{ \partial^k B_1(E) : 0 \leq k \leq q-2 \} \cup \{ \partial^k B_2(E) : 0 \leq k \leq q-2 \}
\end{equation}

Testing membership to this ideal is a much simpler problem: one just needs to generate a Gr\"obner basis starting from $\beta$, and then to apply a term-by-term reduction of the target relation by the elements of this basis. The target relation belongs to the ideal if and only if the polynomial obtained at the end of this reduction process is $\hat{0}$.

The exact same argument applies for $B_3(S)$ and $B_4(S)$ (the base relations given by Theorem $3.2$) instead of $B_1(E)$ and $B_2(E)$, except that we have to consider $0 \leq k \leq q-1$ rather than $0 \leq k \leq q-2$. This, of course, slows down the algorithms that search for divisor function congruences compared to those that search for partition congruences.
\end{remark}

\begin{implementation}
Our programs found at \cite{Maple1, Maple2} construct the initial set of differentiated base relations, denoted by $\beta$ in $(68)$, in an array called \emph{mybase}. We note that the variables $X_k$ from the description above are still remembered in the programs as $E[t, \ldots, t]$, respectively $S[t, \ldots, t]$. The algorithms then proceed to construct a Gr\"obner basis denoted by \emph{B} starting from \emph{mybase}, using the command \emph{Basis} from the MAPLE package \emph{Groebner}, and the ordering \emph{tdeg}$\{X_0, \ldots, X_{q-1}\}$ of the monomials (also known as \emph{grevlex} \cite{tdeg}). The form of this basis is automatically displayed when running the previously referenced MAPLE programs, after the definition of \emph{B}. Then the target relation is reduced by the Gr\"obner basis through the command \emph{NormalForm}. This command, as written in our programs, remembers the factors by which the elements of the basis are multiplied and then added together to obtain the target relation, in an array \emph{Q}. The $k^{\text{th}}$ entry of this array represents the factor corresponding to the $k^{\text{th}}$ element of the basis \emph{B}. The array \emph{Q} can be recovered (if the reader wants to reconstruct the machine computations) by introducing an additional MAPLE command $print(Q);$ (or only $Q;$). If the normal form obtained is $0$, then the reduction is successful and the proof of the desired congruence is complete.
\end{implementation}

\begin{remark}
Constructing a Gr\"obner basis from a sequence of base relations is the most time-consuming task executed by our algorithms. Because of this, we provide a few precomputed Gr\"obner bases in the MAPLE script available at \cite{Maple3}. These could be helpful if the reader wants to test our final computations independently, or to prove new types of congruences with our methods.
\end{remark}

\begin{example}
When $q = 5$, the equations from $(57)$ in the variable $E$ lead (after being differentiated $5 - 2 = 3$ times each) to the following Gr\"obner basis:
\begin{align}
\{&X_3 + \hat{2}X_2 + \hat{2}X_1, \\
&X_4 + \hat{3}X_2 + X_1, \\
&X_1^2 + \hat{3}X_0X_2 + \hat{2}X_0X_1\}
\end{align}

Since these polynomials are in the Gr\"obner basis found by our program, they should all equal $\hat{0}$ when $X_k = \partial^k E$ for $k \in \{0, 1, 2, 3, 4\}$.
\end{example}

\section{Congruences Proven by the Algorithms}

Here we prove the main results of this paper; while the algorithms deal with most of the hard computational work, our contribution will lie in finding adequate target relations. We will discuss:
\begin{enumerate}
\item Ramanujan-type congruences for partitions;
\item Ramanujan-type congruences for the divisor function;
\item Congruences for linear combinations of Ramanujan-type terms for the divisor function;
\item Congruences for weighted convolutions of the partition function;
\item Congruences for weighted convolutions of the divisor function.
\end{enumerate}

We note that in general, the congruences in $1$ are generalized by those in $4$ (in the numbering above), and the congruences in $2$ are generalized by those in $3$ and $5$. However, we will only study a subset of the very numerous congruences in $4$ and $5$ for computational reasons. We note that it is possible to test, with our methods, congruences for linear combinations as in $3$, but for partitions instead of the divisor function. The reason why these are not listed in a separate category of results is that for $q \in \{5, 7, 11\}$, all such congruences are trivial combinations of the congruences in $1$; this was checked by a numerical generation available at \cite{C1}.

Throughout the rest of this section, consider a prime $q > 3$.

\subsection{Ramanujan-Type Congruences}

This subsection deals with congruences of the forms in equation $(9)$, which we reiterate here:
\begin{align}
p^{*k}(qn + r) &\equiv_q 0, \\
\sigma^{*k}(qn + r) &\equiv_q 0,
\end{align}

where $k$ is an integer. As shown in Lemma $3.2$, all that matters for these congruences is the residue of $k$ modulo $q$. In fact, the same lemma gives us the differential algebraic formulation for these congruences (over $\mathbb{Z}/q\mathbb{Z}$):
\begin{align}
\left(\prod_{\substack{0 \leq s < q \\ s \neq r}} (\partial - \hat{s})\right) \left(P^{k}\right) &= \hat{0}, \\
\left(\prod_{\substack{0 \leq s < q \\ s \neq r}} (\partial - \hat{s})\right) \left(S^{k}\right) &= \hat{0}.
\end{align}

These are the target relations anticipated in Section 3: polynomial differential equations in $P$ and $S$, equivalent to the desired congruences. As explained in the remark after the figure from Section 3, the conversion of the target relation in $P$ to a target relation in $E$ is easy; in this case, it can be done by replacing $P$ with $E^{q-1}$ (since all the monomials in the LHS of $(74)$ have the same degree: $k$), then simplifying everything by the adequate power of $E$..

\begin{implementation}
In the programs available at \cite{Maple1} and \cite{Maple2}, the procedures \emph{buildEquation} (also used to build base relations), respectively \emph{buildTarget}, are used to derive target relations in $E$ and $S$, equivalent to equations $(74)$ and $(75)$, and remembered in each program as \emph{target}. Both procedures need some pre-computations about the residues modulo $q$ that should occur in the products from $(74)$ and $(75)$ (which in this case are all residues except for $r$). The program at \cite{Maple1} has direct definitions for the values of $k$ and $r$ (which can be modified by the reader), while the program at \cite{Maple2} prints all suitable pairs $(k, r)$  with $2 \leq k \leq r$ that lead to provable congruences. The reason for this difference in design is that the congruences for divisor functions are less studied, and thus less likely to be known prior to running the program. Moreover, most of the Ramanujan-type partition congruences are non-exceptional (i.e. $k \equiv_q 0, -1, -3$), so the program should not waste time trying to prove each of them.
\end{implementation}

The following theorem gathers all Ramanujan-type congruences with $k \not\equiv_q 0$ proven by our algorithms, both exceptional and non-exceptional:

\begin{theorem}[Ramanujan-Type Congruences]
The pairs $(k, r)$ with $0 < k < q$, $0 \leq r < q$ for which the Ramanujan-type congruence $p^{*k}(qn + r) \equiv_q 0$ holds ($\forall n$) are:
\begin{align}
\text{For $q = 5$:} \qquad &(1, 4), (2, 2), (2, 3), (2, 4), (4, 3), (4, 4)\\
\text{For $q = 7$:} \qquad &(1, 5), (4, 2), (4, 4), (4, 5), (4, 6), (6, 3), (6, 4), (6, 6) \\
\text{For $q = 11$:} \qquad &(1, 6), (3, 7), (5, 8), (7, 9), (8, 2), (8, 4), (8, 5), (8, 7), (8, 8), (8, 9), \\
&(10, 3), (10, 6), (10, 8), (10, 9), (10, 10)
\end{align}

The first pairs on each row lead to the three Ramanujan congruences \cite{Ramanujan}. In addition to the congruences above, our algorithms proved the congruence given by the pair $(13, 14)$ for $q = 17$, although this is not the only alleged congruence modulo $17$. We note that there are no exceptional congruences modulo $13$. Next, the pairs $(k, r)$ with $0 < k < q$, $0 \leq r < q$ for which the Ramanujan-type congruence $\sigma^{*k}(qn + r) \equiv_q 0$ holds ($\forall n$) are:
\begin{align}
&\text{For $q = 5$:} \qquad (2, 1), (3, 1), (3, 2), (4, 1), (4, 2), (4, 3)\\
&\text{For $q = 7$:} \qquad (2, 6), (4, 3), (4, 6), (5, 3)
\end{align}

The first pairs on each row lead to the two congruences proven by N. C. Bonciocat in \cite{Bonciocat}; the rest of them are new results to the best of the author's knowledge. One can verify numerically that there are no such congruences modulo $11$ or $13$.
\end{theorem}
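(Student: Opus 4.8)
The plan is to realize each claimed congruence as a \emph{target relation} in the sense of Section 3 and then to decide its membership in the appropriate differential ideal by a Gr\"obner basis reduction, following exactly the chain of transitions described after Theorem 3.2. The statement has two halves: \emph{soundness} (each listed pair yields a true congruence) and \emph{completeness} (no unlisted pair does), and I would handle these by entirely different techniques.

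For soundness I would proceed pair by pair. Given $(k, r)$, I first form the target relation (74), namely $\left(\prod_{0 \le s < q,\, s \neq r} (\partial - \hat{s})\right)(P^k) = \hat{0}$, and convert it to a differential polynomial in $E$: since every monomial on the left has the same degree $k$, the conversion remark following the figure in Section 3 lets me substitute $P = E^{q-1}$ directly and then divide by the highest admissible power of $E$. For the divisor congruences I instead keep the target relation (75) in the variable $S$. In either case I rewrite the result using $X_j := \partial^j E$ (resp. $\partial^j S$), exploit $\partial^q F \equiv_q \partial F$, and reduce it modulo a Gr\"obner basis of the finite ideal $\beta$ from (68) generated by the differentiated base relations of Theorem 3.1 (resp. Theorem 3.2). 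By the reduction argument of Section 3, a vanishing normal form is equivalent to the target relation lying in the differential ideal, hence equivalent to the congruence; so a zero normal form proves the listed congruence. For the non-exceptional partition pairs ($k \equiv_q -1$ or $k \equiv_q -3$) I can shortcut this step, since they follow immediately from Corollary 3.1 together with Lemma 3.2, as illustrated in Example 3.1; only the genuinely exceptional pairs then require the full Gr\"obner reduction.

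For completeness I would argue numerically and in the opposite direction. The preceding reductions exhibit a set $A$ of provable pairs, and soundness gives $A \subseteq T$, where $T$ is the set of pairs whose congruence holds for all $n$. To obtain $T \subseteq A$ I note that a single counterexample falsifies a universally-quantified congruence, so for each pair $(k, r)$ not in $A$ I would compute the coefficients of $P^k$ (resp. $S^k$) up to a modest bound and exhibit one $n \equiv_q r$ with $[X^n](P^k) \not\equiv_q 0$ (resp. for $S^k$). Producing such an $n$ for every unlisted pair forces $T = A$ and pins the lists down exactly; the same numerical sweep settles the assertions that there are no exceptional partition congruences modulo $13$ and no divisor congruences modulo $11$ or $13$.

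The main obstacle is computational rather than conceptual: the cost of constructing the Gr\"obner basis of $\beta$ grows rapidly with $q$, since $\beta$ lives in the polynomial ring on the $q$ variables $X_0, \dots, X_{q-1}$ and contains about $2(q-1)$ generators on the $E$-side (the ranges $0 \le k \le q-2$ in (68)) or about $2q$ generators on the $S$-side (where the range extends to $0 \le k \le q-1$). For $q = 5, 7, 11$ this is feasible, but already for $q = 17$ only the single pair $(13, 14)$ lies within reach of the available time and memory, which is why no completeness claim is asserted modulo $17$. A secondary care-point is the bookkeeping in the $P \to E$ conversion — confirming that all monomials of the target genuinely share the degree $k$ so that the clean substitution $P = E^{q-1}$ is valid, and choosing the correct power of $E$ to cancel — but this is routine once the degree-homogeneity is observed.
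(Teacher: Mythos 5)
Your proposal matches the paper's proof in both structure and substance: soundness of each listed pair is established by forming the target relation (74) or (75), converting the partition case to a differential polynomial in $E$ via $P = E^{q-1}$, and reducing modulo the Gr\"obner basis of the finite ideal $\beta$ from (68), exactly as the programs at \cite{Maple1, Maple2} do; completeness is settled by the numerical sweep the paper invokes in the remark following the theorem. The only cosmetic difference is your shortcut for the non-exceptional pairs via Corollary 3.1 and Lemma 3.2, which the paper does not bother to separate out.
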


\begin{proof}
Each of the partition congruences can be tested using the program at \cite{Maple1}, while each of the divisor function congruences can be tested with the program at \cite{Maple2}. In the former case, the reader has to specify the values of $q$, $k$, $r$ where indicated and then run the program to check whether the \emph{NormalForm} command prints the value $0$ (case in which the proof is successful). In the latter case, the reader only has to specify the value of $q$ and then run the program to display all valid pairs $(k, r)$. To reconstruct the proofs, one needs to add (or de-comment) the command \emph{print(Q);} right after the \emph{NormalForm} command, as described in the end of Section 3. This displays the factors with which the elements of the Gr\"obner basis (printed by the program after the definition of \emph{B}) are multiplied and then added together to obtained the alleged target relation. For exemplification, we further reconstruct the proof of the first Ramanujan congruence, $p(5n + 4) \equiv_5 0$. We have already proved this congruence via a similar method in Section $2$, but now the goal is to understand how our general algorithms work. While the proof we further provide might seem shorter than the previous one, we must remark that it skips much of the computational work (such as computing the Gr\"obner basis and the target relation).

In Example $3.4$, we displayed the Gr\"obner basis for partition congruences, generated by our program when $q = 5$ (recall $X_k := \partial^k E$). The target relation for the first Ramanujan congruence, obtained after expanding equation $(74)$ with $q = 5$, $k = 1$ and $r = 4$, is (as generated by the procedure \emph{BuildEquation} after the definition of \emph{target}):

\begin{equation}
\begin{split}
X_0^3(X_1 + \hat{4}X_2 + X_3 + \hat{4}X_4) + X_0^2(\hat{2}X_1^2 + \hat{4}X_1X_2 + \hat{3}X_1X_3 + X_2^2) \q+ \\
X_0(X_1^3 + \hat{4}X_1^2X_2) + \hat{4}X_1^4 = \hat{0}
\end{split}
\end{equation}

The factors generated in the array \emph{Q} of the program (together with the form of the basis \emph{B}) indicate that the expression in the LHS above can be decomposed as:

\begin{equation}
\begin{split}
&\left(X_0^3 + \hat{3}X_0^2 X_1\right)\left(X_3 + \hat{2}X_2 + \hat{2}X_1\right) + \\
&\hat{4} X_0^3 \left(X_4 + \hat{3}X_2 + X_1\right) + \\
&\left(\hat{4}X_1^2 + \hat{2}X_0X_2 + \hat{3}X_0X_1 \right)\left(X_1^2 + \hat{3}X_0X_2 + \hat{2}X_0X_1\right),
\end{split}
\end{equation}

which can also be checked manually. The second factor in each product above is $\hat{0}$ according to $(69)$, $(70)$, and $(71)$ (these are precisely the elements of the Gr\"obner basis), so our proof is complete.
\end{proof}

\begin{remark}
A numerical generation shows that for $q \in \{5, 7, 11\}$, the pairs displayed in Theorem $4.1$ are the only ones that lead to valid congruences.
\end{remark}

\subsection{Linear Combinations of Ramanujan-Type Terms}

Consider, now, congruences of the following form over $\mathbb{Z}/q\mathbb{Z}$:
\begin{align}
\left( c_1p + c_2p^{*2} + \ldots + c_{q-1}p^{*q-1} \right)(qn + r) &= \hat{0} \q\q(\forall n), \\
\left( c_1\sigma + c_2\sigma^{*2} + \ldots + c_{q-1}\sigma^{*q-1} \right)(qn + r) &= \hat{0} \q\q(\forall n),
\end{align}

where each $c_k \in \mathbb{Z}/q\mathbb{Z}$. These are perhaps the most immediate generalizations of Ramanujan-type congruences modulo $q$, since they just take linear combinations of the terms $p^{*k}(qn + r)$, $\sigma^{*k}(qn + r)$. We restricted the convolution exponents here to $k \in \{1, \ldots, q-1\}$ because there would be too many trivial congruences if we also allowed $k \equiv_q 0$. Also, there is some sort of repetition of these congruences modulo $q$, in the sense that one can add any fixed multiple of $q$ to all convolution exponents while preserving the congruence (this follows from the computations below and the observations from Section $3$). However, our methods can in principle be used to prove such congruences for any range of the convolution exponent $k \in \mathbb{Z}$.

\begin{remark}
Clearly, all $(q-1)$-tuples that satisfy relation $(84)$ for a fixed residue $r$, respectively all $(q-1)$-tuples that satisfy relation $(85)$ for a fixed $r$, form vector spaces over $\mathbb{Z}/q\mathbb{Z}$. Our goal is therefore to determine bases for these vector spaces (in order to avoid listing thousands of congruences with no pattern).

As priorly mentioned, these relations generalize the Ramanujan-type congruences modulo $q$. Indeed, if all coefficients $c_k$ were $\hat{0}$ except for one, one would obtain a congruence of the form $p^{*k}(qn + r) \equiv_q 0$ or $\sigma^{*k}(qn + r) \equiv_q 0$. Conversely, starting from a set of Ramanujan-type congruences for partitions, respectively divisor functions (as in the previous subsection), any linear combination of them will satisfy relation $(84)$, respectively $(85)$. We say that these linear combinations lead to \emph{non-exceptional} congruences; the big question is whether there are any \emph{exceptional} ones.

Unfortunately, a numerical generation available at \cite{C1} (writen in C++ this time) shows that there are no exceptional congruences of this form for partitions (i.e., as in $(84)$), at least not with $q \in \{5, 7, 11\}$. On the other hand, there are lots of them for divisor functions. We therefore only study the congruences of the form in $(85)$ for the rest of this subsection.
\end{remark}

We start by noticing that any congruence of the form in $(85)$ can be rephrased as:

\begin{equation}
\left(\prod_{\substack{0 \leq s < q \\ s \neq r}} (n - \hat{s})\right) \left( c_1\sigma + c_2\sigma^{*2} + \ldots + c_{q-1}\sigma^{*q-1} \right)(n) = \hat{0}, \q\q(\forall n),
\end{equation}

by the same judgment as in the proof of Lemma $3.2$. In the language of Differential Algebra over $\mathbb{Z}/q\mathbb{Z}$, the relation above translates to:

\begin{equation}
\left(\prod_{\substack{0 \leq s < q \\ s \neq r}} (\partial - \hat{s})\right) \left(c_1S + c_2S^2 + \ldots + c_{q-1}S^{q-1}\right) = \hat{0},
\end{equation}

which is a perfectly good target relation for our purposes: a polynomial differential equation in $S$. As a side note, multiplying this relation by $S^{kq}$ will just add $kq$ to each of the (convolution) exponents, by the same reasoning as in Lemma $3.2$.

\begin{implementation}
In the program available at \cite{Maple2}, the procedure \emph{testCombo} deals with constructing the target relation corresponding to the expansion of $(87)$, as well as with testing the membership of this target relation to the differential ideal generated by the base relations and the derivation $\partial$. The \emph{testCombo} procedure takes two arguments: the residue $r$ and the sequence $(c_1, \ldots, c_{q-1})$, and prints $0$ if the proof attempt was successful. To recover the form of the target relation and the array \emph{Q} of factors used in the reduction by the Gr\"obner basis, just add the commands \emph{print(target);} and \emph{print(Q);} in the implementation of \emph{testCombo}.
\end{implementation}

\begin{theorem}[Linear Combination Congruences]
For $q \in \{5, 7, 11\}$ and $0 \leq r < q$, the space:

\begin{equation}
V_{q, r} := \left\{ (c_1, \ldots, c_{q-1}) \in (\mathbb{Z}/q\mathbb{Z})^q : \left( c_1\sigma + c_2\sigma^{*2} + \ldots + c_{q-1}\sigma^{*q-1} \right)(qn + r) = \hat{0} \q\q(\forall n)\right\}
\end{equation}

has the basis indicated in the third row of the first table from the introduction.

All the exceptional basis congruences are shown in bold, since they are new results. As mentioned in the introduction, the bases of the spaces $V_{11, r}$ are printed by omitting the last three coefficients, which are understood to be $\hat{0}$. All bases are shown in reduced row echelon form.
\end{theorem}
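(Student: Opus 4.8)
The plan is to treat the coefficients $c_1, \ldots, c_{q-1}$ as indeterminates and reduce the determination of each space $V_{q,r}$ to a finite linear-algebra problem over $\mathbb{Z}/q\mathbb{Z}$, running in parallel the membership test already used for Theorem $4.1$. First I would expand the target relation $(87)$ symbolically: by Theorem $3.2$ and the computation in the proof of Lemma $3.2$, the left-hand side of $(87)$ becomes a single polynomial differential equation in $S$, and after the substitution $X_k := \partial^k S$ together with the reduction described around $(68)$ it becomes a polynomial $T \in (\mathbb{Z}/q\mathbb{Z})[X_0, \ldots, X_{q-1}]$. The decisive structural point is that $T$ depends \emph{linearly} on $(c_1, \ldots, c_{q-1})$: each convolution power $S^k$ enters through the single coefficient $c_k$, and applying the fixed operator $\prod_{0 \le s < q,\, s \ne r}(\partial - \hat s)$ preserves this linearity.

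Next, for each prime $q \in \{5, 7, 11\}$ I would compute once the Gr\"obner basis $G$ of the ideal generated by the analogue for $B_3(S)$ and $B_4(S)$ of the set $\beta$ in $(68)$ (now differentiated for $0 \le k \le q-1$), and reduce $T$ to its normal form $N := \mathrm{NF}_G(T)$. Since reduction modulo a Gr\"obner basis is $\mathbb{Z}/q\mathbb{Z}$-linear in the coefficients of the input, $N$ is again a polynomial in $X_0, \ldots, X_{q-1}$ whose coefficients are linear forms $\ell_1(c), \ldots, \ell_m(c)$ in the $c_k$. Because every element of $\beta$ is a genuine identity inherited from $(4)$ and $(5)$, the reduction is sound: whenever $N = \hat 0$, the corresponding congruence $(85)$ holds. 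Hence the solution set $W_{q,r} := \{ c : \ell_1(c) = \cdots = \ell_m(c) = \hat 0 \}$ of the resulting homogeneous linear system satisfies $W_{q,r} \subseteq V_{q,r}$; solving this system over $\mathbb{Z}/q\mathbb{Z}$ and passing to reduced row echelon form yields precisely the bases recorded in the table, and checking that each listed tuple annihilates every $\ell_i$ establishes the lower bound $\dim V_{q,r} \ge \dim W_{q,r}$. For $q = 11$ one reads off from this echelon form that the coordinates $c_8, c_9, c_{10}$ are pinned to $\hat 0$, justifying their omission.

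The main obstacle is the reverse inclusion $V_{q,r} \subseteq W_{q,r}$. The Gr\"obner reduction only certifies membership in the \emph{generated} differential ideal, and a priori this could be strictly smaller than the full ideal of all polynomial relations among the power series $\partial^k S$; were that the case, there might exist exceptional tuples in $V_{q,r}$ invisible to our reduction. To close this gap I would not push the algebra further but instead invoke the independent numerical generation at \cite{C1}: for each $q$ and $r$ one computes $\sigma^{*k}(qn+r)$ for $k = 1, \ldots, q-1$ and many $n$, forms the corresponding matrix over $\mathbb{Z}/q\mathbb{Z}$, and verifies that the space of tuples $c$ killing all of its rows has dimension exactly $\dim W_{q,r}$. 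Numerical consistency being a \emph{necessary} condition for $(85)$, this kernel contains $V_{q,r}$; once its dimension equals the lower bound already proven algebraically, all three spaces coincide and the tabulated basis is complete. The only genuinely expensive step is the single Gr\"obner-basis computation per prime, described in Section $3$; the symbolic reduction then delivers every congruence for a given $r$ at once, which is why the natural output is a basis of $V_{q,r}$ rather than a list.
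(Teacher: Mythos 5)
Your proposal is correct, and its logical skeleton is the same sandwich the paper uses: an algebraic certificate shows some subspace is contained in $V_{q,r}$, a finite numerical verification shows $V_{q,r}$ is contained in a space of the same dimension, and the two inclusions force equality. (Note that the numerical step is genuinely rigorous for the upper bound: any tuple in $V_{q,r}$ must in particular satisfy the congruence for all $n$ up to the tested bound, so $V_{q,r}$ sits inside the finite-check kernel automatically.) Where you differ is in how the algebraic half is organized. The paper runs the pipeline in the opposite direction: it first generates the numerically admissible tuples with the C++ program at \cite{C1}, extracts a basis of their span with \cite{Maple3}, and then feeds each individual basis vector to the \emph{testCombo} membership test of \cite{Maple2}; the provable subspace is thus exhibited one certificate at a time. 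You instead keep $(c_1,\ldots,c_{q-1})$ symbolic, exploit the linearity of the target relation $(87)$ in the $c_k$ and the $\mathbb{Z}/q\mathbb{Z}$-linearity of normal-form reduction modulo a fixed Gr\"obner basis, and extract the entire provable subspace $W_{q,r}$ as the kernel of one homogeneous linear system. Your variant is arguably cleaner: it does not need the numerics to \emph{propose} candidates (only to certify completeness), it computes each normal form $\mathrm{NF}_G\bigl(\prod_{s\neq r}(\partial-\hat s)(S^k)\bigr)$ only once per $k$ rather than once per basis vector, and it makes explicit the one genuine logical gap (the generated ideal may be strictly smaller than the ideal of all relations among the $\partial^k S$), which the paper leaves implicit. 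The paper's version buys simplicity of implementation, since \emph{testCombo} already exists and no parametric reduction is needed. Both arguments are sound and yield the same theorem.
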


\begin{proof}
The proof of this statement has three steps. First, one must generate a complete set of congruences of the form in $(85)$ (while ignoring perhaps some repetitions by scaling), which verify numerically up to large values. This is done in a C++ program available at \cite{C1}. Then, for fixed values of $q$ and $r$, one must construct a basis for the span of the generated vectors $(c_1, \ldots, c_{q-1})$, which is allegedly $V_{q, r}$. This can be done through a short MAPLE program available at \cite{Maple3}. Finally, one must test that our main program from \cite{Maple2} can prove each of the congruences from the alleged basis, following the steps from the implementation notes above. The reader can check that this approach is successful for all the basis vectors that verify numerically.
\end{proof}

\begin{remark}
The table from Section $1$ only provides a basis for each vector space studied, but the complete spaces of congruences involved are much more numerous and, in some cases, more appealing than the basis congruences. We further provide a simple example:
\end{remark}

\begin{example}
Consider the vector space $V_{11, 2}$; one of its bases, according to the table in the introduction, is:

\begin{equation}
\begin{split}
\{&(1, 8, 0, 0, 0, 2, 10, 0, 0, 0),\\
&(0, 0, 1, 0, 0, 5, 3, 0, 0, 0),\\
&(0, 0, 0, 1, 0, 7, 2, 0, 0, 0),\\
&(0, 0, 0, 0, 1, 4, 9, 0, 0, 0)\}
\end{split}
\end{equation}

By adding the last two vectors in this basis, one obtains the simpler $(q-1)$-tuple:

\begin{equation}
(0, 0, 0, 1, 1, 0, 0, 0, 0, 0)
\end{equation}

This corresponds to the congruence anticipated in $(15)$, which can also be written as:

\begin{equation}
\sigma^{*4}(n) \equiv_{11} -\sigma^{*5}(n), \quad\quad \forall n \equiv_{11} 2
\end{equation}

\end{example}

\subsection{Weighted Convolutions of Partitions and Divisor Functions}

Recall the definition of weighted convolutions from $(17)$. After some thought, one can see that any polynomial differential equation generated by some power series $F(X) = \sum_{n \geq 0} f(n)X^n$ and the derivation $\partial$, over the field $\mathbb{Z}/q\mathbb{Z}$, can be written as:

\begin{equation}
\sum_{k \geq 0} f^{*k}_{w_k} \equiv_q 0,
\end{equation}

where each $w_k$ is a multivariate polynomial in $k$ variables. Indeed, this can be achieved by grouping all differential monomials of degree $k$ into a single weighted convolution $f^{*k}_{w_k}$; the exponents in the monomials of $w_k$ will correspond to the exponents of $\partial$ in the differential monomials of the initial equation. For example, $F\partial^2 F + \hat{3}(\partial F)^2 = \hat{0}$ amounts to $f^{*2}_{w} \equiv_q 0$, where $w(a, b) = a^0b^2 + 3ab$.

All the congruences with which our algorithms operate are, thus, translatable into the form of $(92)$, where $f$ is the partition function or the divisor function. However, not every such congruence can be considered a significant result, because the expressions of the polynomials $w_k$ can get very lengthy.

The goal is, therefore, to find particular cases of equations like $(92)$ with reasonably simple forms. The Ramanujan-type congruences and the linear combinations of Ramanujan-type terms studied before are all examples of such particular cases. Another idea originated from this perspective is to consider individual terms $p^{*k}_w$ and $\sigma^{*k}_w$, for small values of $k$. Can one characterize the polynomials $w$ for which these terms cancel out modulo $q$? Indeed, such relations would generalize all Ramanujan-type congruences studied in Subsection $4.1$, because the differential polynomials in $(74)$ and $(75)$ have a uniform degree.

We further study the subproblem of the congruences $p^{*2}_{w} \equiv_q 0$ and $\sigma^{*2}_{w} \equiv_q 0$, i.e. we set $k = 2$. As explained above, these are equivalent to the following differential equations over $\mathbb{Z}/q\mathbb{Z}$:
\begin{align}
\sum_{j, k \geq 0} \left[a^jb^k\right](w(a, b)) \cdot \partial^j P \q\partial^k P &= \hat{0} \\
\sum_{j, k \geq 0} \left[a^jb^k\right](w(a, b)) \cdot \partial^j S \q\partial^k S &= \hat{0}
\end{align}

Here, $\left[a^j b^k\right](w(a, b))$ denotes the coefficient of the monomial $a^j b^k$ in the polynomial $w(a, b)$. These are the target relations that our algorithms will attempt to prove, with the mention that $(93)$ must be first converted to a polynomial differential equation in $E$. Since all monomials have the same degree: 2, it suffices to replace $P$ by $E^{q-1}$, as explained in the end of Section $3$, and then to simplify by an adequate power of $E$.

\begin{remark}
From the nature of $(93)$ and $(94)$, one can see that certain simplifications to the polynomial $w$ are possible: we can, for example, replace any monomial $a^j b^k$ with $a^k b^j$; in general, any permutation of the exponents within a monomial is allowed. \textbf{The convention that we will follow is that the exponents of these monomials should be listed in decreasing order.}

Another observation is that each congruence of the form in $(93)$ or $(94)$ can be differentiated by $\partial$ to obtain another such congruence; only the polynomial $w$ will change. Since relations like $(74)$ and $(75)$ are abundant, this observation yields a wide range of valid weighted-convolution congruences. For simplicity, we will focus on discovering the valid polynomials $w$ that consist of a reduced number of monomials.
\end{remark}

\begin{implementation}
In the programs available at \cite{Maple1} and \cite{Maple2}, the procedure \emph{testWeight} deals with constructing the polynomial differential equations in $(93)$ (translated into an equation in $E$), respectively $(94)$, as well as with testing whether these target relations belong to the ideal generated by the base relations and the derivation $\partial$. The procedure \emph{testWeight} takes one argument: a sequence of monomials in the variables $a$ and $b$ whose sum yields the polynomial $w(a, b)$ (intuitively, one should just replace each '+' sign in $w(a, b)$ with a comma; the reader should be careful to separate $a$ and $b$ by a space or star when necessary). As before, the value of $q$ is specified at the beginning of the programs. Ultimately, \emph{testWeight} prints the value $0$ if the proof of the target relation was successful; to recover the target equation and the array of factors used in the reduction by the Gr\"obner basis, just de-comment the instructions \emph{print(eq);} and \emph{print(Q);} from the implementation of \emph{testWeight}. Running our algorithms leads to the following theorem:
\end{implementation}

\begin{theorem}[Weighted Convolution Congruences]
When $q \in \{5, 7\}$, one has:

\begin{equation}
p^{*2}_w \equiv_5 0, \qquad \text{respectively} \qquad \sigma^{*2}_w \equiv_7 0,
\end{equation}

for all corresponding polynomials $w(a, b)$ shown in the second table from the introduction. In each case, those are the only valid nonzero polynomials $w$ over $\mathbb{Z}/q\mathbb{Z}$ consisting of at most $q-2$ monomials, up to scaling and to the convention that exponents are listed in decreasing order (see the remark after $(94)$). The prime $q$ is restricted to $\{5, 7\}$ for reasons of computational efficiency. All of the resulting congruences are new, to the best of the author's knowledge.
\end{theorem}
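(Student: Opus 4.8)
The plan is to establish the two halves of the statement separately: first, that each weight polynomial $w$ listed in the second table of Section~1 does yield a valid congruence, and second, that no other nonzero $w$ with at most $q-2$ monomials does (up to the stated scaling and exponent-ordering conventions). Both halves reduce to the ideal-membership machinery of Section~3, so the work here lies in organizing the search rather than in new algebra.

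For the sufficiency direction I would proceed $w$ by $w$. Given a listed weight polynomial, I form the target relation $(93)$ or $(94)$: for the partition congruences $p^{*2}_w \equiv_q 0$ this is the degree-$2$ differential polynomial $\sum_{j,k}[a^jb^k](w)\,\partial^jP\,\partial^kP$, which (having uniform degree $2$) is converted to a differential polynomial in $E$ simply by substituting $P \mapsto E^{q-1}$ and dividing out the largest common power of $E$, as explained just before the theorem; for the divisor-function congruences $\sigma^{*2}_w \equiv_q 0$ the relation $(94)$ is already a differential polynomial in $S$ and needs no conversion. In either case I reduce the target relation to its normal form modulo the Gr\"obner basis $B$ built from the base relations of Theorem~$3.1$ (for $E$) or Theorem~$3.2$ (for $S$) and their $\partial$-derivatives, exactly as in $(68)$. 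Each listed $w$ is valid precisely when this normal form is $\hat{0}$, which the \emph{testWeight} procedure confirms; the array $Q$ of quotients then exhibits the explicit combination of base relations witnessing membership, giving a reconstructible proof for each congruence, just as in the worked example following Theorem~$4.1$.

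The completeness direction is where the real work lies, and the key structural observation is linearity. Since $(93)$ and $(94)$ depend $\mathbb{Z}/q\mathbb{Z}$-linearly on the coefficients $[a^jb^k](w)$, and since both the conversion to $E$ and the normal-form reduction are $\mathbb{Z}/q\mathbb{Z}$-linear operations, the set of all valid weight polynomials forms a vector space $W$ over $\mathbb{Z}/q\mathbb{Z}$. To make $W$ finite-dimensional I would first normalize the monomials: by Fermat's relation $\partial^q \equiv_q \partial$ from $(67)$, every power $\partial^j P$ with $j \geq 1$ collapses onto one of $\partial P, \ldots, \partial^{q-1}P$, while $\partial^0 P = P$, so each monomial $a^jb^k$ reduces to exponents in $\{0,\ldots,q-1\}$; the decreasing-order convention then removes the $a \leftrightarrow b$ symmetry, leaving an ambient space of dimension $\binom{q+1}{2}$. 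I would compute $W$ as the kernel of the linear map sending each reduced monomial to the normal form of its associated degree-$2$ differential polynomial modulo $B$ (note that $W$ contains the non-sparse Ramanujan-type weights of Subsection~$4.1$, so its sparse members are genuinely new). With $W$ in hand, classifying the sparse valid weights becomes a finite search: for every monomial support of size at most $q-2$ one intersects $W$ with the corresponding coordinate subspace and records any nonzero solution. Since $\binom{q+1}{2}$ equals $15$ for $q=5$ and $28$ for $q=7$, the number of supports to examine is modest, and one verifies that the only nonzero sparse solutions, up to scaling, are exactly those tabulated.

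The main obstacle is guaranteeing that this enumeration is genuinely exhaustive rather than merely confirming the tabulated weights. Two points require care: that the exponent reduction via $\partial^q \equiv_q \partial$ together with the decreasing-order convention accounts for every weight polynomial without omission or double-counting, and that intersecting $W$ with each small coordinate subspace detects \emph{all} minimal-support members of $W$ (this is, in effect, a minimum-support problem for $W$, tractable only because the ambient dimension is small). The restriction to $q \in \{5,7\}$ is dictated precisely by the cost of the underlying Gr\"obner-basis computation and of this support search, which is why the theorem stops short of larger primes.
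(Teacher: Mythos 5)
Your sufficiency half is exactly the paper's argument: for each tabulated $w$, build the target relation $(93)$ or $(94)$ (converting $P\mapsto E^{q-1}$ in the uniform-degree partition case), and reduce it to normal form modulo the Gr\"obner basis of $(68)$ via \emph{testWeight}. No issues there.

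The completeness half has a genuine gap. You identify ``the set of all valid weight polynomials'' with the kernel of the map sending a monomial $a^jb^k$ to the normal form of $\partial^jF\,\partial^kF$ modulo the basis $B$. But ideal membership is only a \emph{sufficient} criterion for the congruence: the kernel you compute is the space of weights that the method can \emph{certify}, which is a priori only a subspace of the space of weights for which $f^{*2}_w\equiv_q 0$ actually holds. Nothing in your argument rules out a sparse $w$ whose congruence is true but whose target relation fails to lie in the ideal generated by $\beta$; your enumeration would then miss it, and the ``those are the only valid nonzero polynomials'' clause would be unproved. The paper closes this hole in the opposite order: a C++ search over all supports of size at most $q-2$ first finds every $w$ that even survives a numerical check up to large $n$ (so any $w$ outside that list is refuted by an explicit counterexample), and only then proves each surviving candidate by the Gr\"obner reduction. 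Validity $\subseteq$ numerically-verified $\subseteq$ provable $\subseteq$ valid, so all three sets coincide. To repair your version you would need either that same numerical refutation of everything outside your kernel, or a completeness theorem for the ideal-membership test, which the paper does not claim. (Your linear-algebraic reformulation is otherwise attractive --- it replaces the brute-force support search by a kernel computation in dimension $\binom{q+1}{2}$ --- but it can only ever delimit the \emph{provable} weights, not the \emph{valid} ones.) A second, smaller point: collapsing exponents via $n^q\equiv_q n$ can merge distinct monomials of $w$, so ``at most $q-2$ monomials'' before and after reduction are not literally the same condition; the paper's table and search work with the unreduced representatives, and you should fix one convention before counting supports.
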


\begin{proof}
The program available at \cite{C2} generates all the nonzero polynomials $w$ that lead to valid congruences of the forms in $(95)$ (verified up to large values), up to scaling by a factor, such that $w$ consists of at most $q - 2$ monomials. This upper bound seems to provide the right equilibrium between the simplicity and the number of results obtained, but it can of course be modified by the reader where indicated in the program.

Then, each allegedly valid polynomial $w$ can be plugged into the procedure \emph{testWeight} from our programs available at \cite{Maple1, Maple2}, as detailed in the previous implementation notes. Luckily, all the alleged congruences obtained for $q \in \{5, 7\}$ are successfully proven by our algorithms.
\end{proof}

\begin{remark}
As in the previous subsection, one can take linear combinations of the valid polynomials $w$ to obtain new valid polynomials (this is clear from the definition of weighted convolution):
\end{remark}

\begin{example}
Consider the polynomials $w$ that lead to valid congruences of the form $p^{*2}_{w} \equiv_7 0$, shown in the second table from the introduction:
\begin{align}
w_1(a, b) &= a^6 + a^5 + 6a^4 + 6a^2 + 2ab \\
w_2(a, b) &= a^6 + 3a^4 + 2a^3 + 6ab + a
\end{align}

Subtracting the second polynomial from the first uncovers another valid polynomial $w$, which has degree $5$ (but more monomials):

\begin{equation}
w(a, b) = a^5 + 3a^4 + 5a^3 + 6a^2 + 3ab + 6a
\end{equation}
\end{example}

\section{Related Congruences \& Conjectures for Divisor Functions}

While developing the proofs and the algorithmic approach presented in the previous sections, we came across several miscellaneous results that are worth mentioning. First, we need some notations:

\begin{notation} Given a positive integer $n$, let $\sigma_{odd}(n)$ and $\sigma_{even}(n)$ denote the sum of all odd, respectively all even positive divisors of $n$ (with $\sigma_{odd}(n) := \sigma_{even}(n) := 0$ for $n \leq 0$). Also, let $\{\sigma_{inv}(n)\}_{n \geq 0}$ denote the convolution inverse of the sequence $\{\sigma(n+1)\}_{n \geq 0}$, i.e.:
\end{notation}

\begin{equation}
\sum_{k = 0}^{n} \sigma(k+1)\sigma_{inv}(n-k) = \begin{cases}
1, & \text{ if } n = 0, \\
0, & \text{ else.}
\end{cases}
\end{equation}

We can now state our results:

\begin{proposition}
If $q$ is an odd prime, then the following two congruences are equivalent:
\begin{align}
\sum_{k = 1}^{q-1} c_k\sigma^{*k}(qn + r) &\equiv_q 0  \q\q(\forall n) \\
\sum_{k = 1}^{q-1} \left(\frac{q+1}{2}\right)^k c_k{\sigma_{even}}^{*k}(qn + 2r) &\equiv_q 0  \q\q(\forall n)
\end{align}
\end{proposition}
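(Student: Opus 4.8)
The plan is to exploit the fact that the even divisors of $n$ are precisely twice the divisors of $n/2$ (when $n$ is even, and there are none when $n$ is odd), which gives a clean generating-function identity for $\sigma_{even}$ in terms of $S$. Let me introduce the generating series $S_{even}(X) := \sum_{n \geq 1} \sigma_{even}(n) X^n$. First I would establish that
\begin{equation}
S_{even}(X) = 2\,S(X^2),
\end{equation}
since each even divisor $2d$ of an integer $m$ contributes $2d$, so $\sigma_{even}(m) = 2\sigma(m/2)$ when $2 \mid m$ and $0$ otherwise; collecting powers of $X$ gives exactly $2 S(X^2)$. The key consequence is that the generating function of ${\sigma_{even}}^{*k}$ is $S_{even}^k(X) = 2^k S(X^2)^k = 2^k \big(S^k\big)(X^2)$, so the coefficient of $X^N$ in $S_{even}^k$ is $2^k$ times the coefficient of $X^{N/2}$ in $S^k$ (and $0$ when $N$ is odd).

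Next I would translate the two congruences into statements about coefficients and compare them. The congruence $(101)$ says $\sum_k c_k\, [X^{qn+r}](S^k) \equiv_q 0$ for all $n$. For $(102)$, the coefficient of $X^{qn+2r}$ in ${\sigma_{even}}^{*k}$ is nonzero only when $qn + 2r$ is even; writing this coefficient via the identity above, it equals $2^k\,[X^{(qn+2r)/2}](S^k)$ whenever $qn+2r$ is even, so the left side of $(102)$ becomes $\sum_k (\tfrac{q+1}{2})^k c_k\, 2^k\, [X^{(qn+2r)/2}](S^k)$. The crucial arithmetic simplification is that $\tfrac{q+1}{2}\cdot 2 = q+1 \equiv_q 1$, so the factors $(\tfrac{q+1}{2})^k 2^k$ all collapse to $1$ modulo $q$. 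Thus $(102)$ reduces to $\sum_k c_k\,[X^{(qn+2r)/2}](S^k) \equiv_q 0$ for every $n$ making $qn+2r$ even.

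The heart of the argument — and the step I expect to be the main obstacle — is matching the residue classes: I must show that as $n$ ranges so that $qn+2r$ is even, the index $(qn+2r)/2$ ranges over exactly the residue class $r \pmod q$, thereby making the reduced form of $(102)$ literally identical to $(101)$ with $n$ replaced by this new index. Since $q$ is odd, I would argue that $qn + 2r$ is even iff $n$ is even, say $n = 2m$, whence $(qn+2r)/2 = qm + r$; as $m$ ranges over all nonnegative integers this traverses precisely the arithmetic progression $qm+r$, matching the index in $(101)$. Conversely every residue $qm+r$ arises this way. This establishes a bijection between the constraints appearing in $(101)$ and those in $(102)$, so the two families of congruences hold or fail together. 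The only care needed is the boundary/convention handling for small arguments (the $\sigma(n)=0$ and $\sigma_{even}(n)=0$ conventions for $n \leq 0$), which I would check does not disturb the correspondence since both sides vanish identically there.
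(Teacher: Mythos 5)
Your proposal is correct and follows essentially the same route as the paper: the paper establishes ${\sigma_{even}}^{*k}(n) = 2^k\sigma^{*k}(n/2)$ for even $n$ (and $0$ for odd $n$) directly from the convolution sum, whereas you package the same fact as the generating-function identity $S_{even}(X) = 2S(X^2)$, and both arguments then use $\left(\tfrac{q+1}{2}\right)^k 2^k = (q+1)^k \equiv_q 1$ and the observation that $qn+2r$ is even iff $n$ is, so the indices $(qn+2r)/2$ sweep out exactly the progression $qm+r$. No gaps.
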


\begin{proof}
This proposition is a direct consequence of the following elementary formula:

\begin{equation}
\sigma_{even}(n) = \begin{cases}
2\sigma\left(\frac{n}{2}\right), & n \equiv_2 0 \\
0, & n \equiv_2 1
\end{cases}
\end{equation}

Indeed, the even divisors of an even integer are just twice the divisors of half of that integer, and an odd integer has no even divisors. More generally, for any $k \geq 1$:

\begin{equation}
{\sigma_{even}}^{*k}(n) = \begin{cases}
2\sigma^{*k}\left(\frac{n}{2}\right), & n \equiv_2 0 \\
0, & n \equiv_2 1
\end{cases}
\end{equation}

This is because in any convolution sum ${\sigma_{even}}^{*k}(n) = \sum_{n_1 + \ldots + n_k = n} \sigma_{even}(n_1) \ldots \sigma_{even}(n_k)$, the only way how a term can be nonzero is if $n_1, \ldots, n_k$ are all even (and in consequence $n$ is even too). Therefore, $\sigma_{even}^{*k}(n) = 0$ when $n$ is odd, and when $n$ is even we can write:

\begin{equation}
\begin{split}
{\sigma_{even}}^{*k}(n) &= \sum_{2n_1 + \ldots + 2n_k = n} \sigma_{even}(2n_1) \ldots \sigma_{even}(2n_k) \\
&= \sum_{n_1 + \ldots + n_k = \frac{n}{2}} 2^k \sigma(n_1) \ldots \sigma(n_k) \\
&= 2^k \sigma^{*k}\left(\frac{n}{2}\right)
\end{split}
\end{equation}

Finally, using $(103)$, we can rewrite the LHS of $(101)$ as:
\begin{equation}
\sum_{k = 1}^{q-1} \left(\frac{q+1}{2}\right)^k c_k{\sigma_{even}}^{*k}(qn + 2r) = \sum_{k = 1}^{q-1} (q+1)^k c_k\sigma^{*k}\left(\frac{qn}{2} + r\right),
\end{equation}

when $n$ is even. Therefore, the statement that $(101)$ holds for all even $n$ is equivalent to the statement that $(100)$ holds for all $n$. However, $(101)$ always holds when $n$ is odd because the LHS is just $0$ in that case, so $(100)$ and $(101)$ are completely equivalent.

\end{proof}

As a consequence, all of our congruences concerning linear combinations of divisor function convolutions can be uniformly translated into congruences for the even divisor function, by doubling the value of $r$ modulo $q$ and scaling the coefficients accordingly. In particular, from the convolution congruences $\sigma^{*2}(5n + 1) \equiv_q 0$ and $\sigma^{*2}(7n + 6) \equiv_q 0$, established in the previous section as well as in N. C. Bonciocat's paper \cite{Bonciocat}, we can infer two of Bonciocat's conjectures from the same paper:

\begin{corollary}[Bonciocat's First Two Conjectures]
For any $n \geq 0$, one has:
\begin{align}
{\sigma_{even}}^{*2}(5n+2) &\equiv_5 0 \\
{\sigma_{even}}^{*2}(7n+5) &\equiv_7 0
\end{align}
\end{corollary}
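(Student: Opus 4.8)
The plan is to derive both congruences as immediate specializations of Proposition 5.1, feeding in the two single-term divisor-function congruences $\sigma^{*2}(5n+1) \equiv_5 0$ and $\sigma^{*2}(7n+6) \equiv_7 0$ (established in the previous section and originally due to Bonciocat). Since each target involves only the exponent $k = 2$, I would take the coefficient vector with $c_2 = 1$ and all remaining $c_k = 0$, so that the sums in $(100)$ and $(101)$ collapse to single terms.

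For the first congruence I would apply Proposition 5.1 with $q = 5$, $r = 1$, and the vector $(c_1, c_2, c_3, c_4) = (0, 1, 0, 0)$. The proposition then equates $\sigma^{*2}(5n+1) \equiv_5 0$ with $\left(\frac{6}{2}\right)^2 {\sigma_{even}}^{*2}(5n + 2) \equiv_5 0$, that is, $9\, {\sigma_{even}}^{*2}(5n+2) \equiv_5 0$. Because $9 \equiv_5 4$ is a unit modulo $5$, I can divide through by the scalar factor and recover ${\sigma_{even}}^{*2}(5n+2) \equiv_5 0$, which is precisely the first claim.

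For the second congruence I would apply Proposition 5.1 with $q = 7$, $r = 6$, and $(c_1, \ldots, c_6) = (0, 1, 0, 0, 0, 0)$. This equates $\sigma^{*2}(7n+6) \equiv_7 0$ with $\left(\frac{8}{2}\right)^2 {\sigma_{even}}^{*2}(7n + 12) \equiv_7 0$, i.e.\ $16\, {\sigma_{even}}^{*2}(7n+12) \equiv_7 0$; since $16 \equiv_7 2$ is again a unit, I cancel it to get ${\sigma_{even}}^{*2}(7n+12) \equiv_7 0$ for all integers $n$. The last step is a residue match: writing $7n + 12 = 7(n+1) + 5$ and reindexing, this is exactly ${\sigma_{even}}^{*2}(7n+5) \equiv_7 0$. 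Because the congruence holds for all $n \in \mathbb{Z}$, restricting to $n \geq 0$ is automatic.

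I do not anticipate a genuine obstacle, since the corollary is a mechanical application of Proposition 5.1. The only points that require attention are checking that the scaling factor $\left(\frac{q+1}{2}\right)^2$ is invertible modulo $q$ in each case, so that it may be cancelled, and carefully tracking the doubling $r \mapsto 2r \pmod{q}$ dictated by the proposition. In particular, one must notice that $2 \cdot 6 = 12 \equiv_7 5$, which is what produces the stated residue $5$ after the index shift.
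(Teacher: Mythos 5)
Your proposal is correct and is exactly the paper's route: the corollary is stated as an immediate specialization of Proposition 5.1 to the single-term vectors with $c_2 = 1$, using the two Bonciocat congruences $\sigma^{*2}(5n+1)\equiv_5 0$ and $\sigma^{*2}(7n+6)\equiv_7 0$, with the doubling $r \mapsto 2r \bmod q$ (so $2\cdot 6 \equiv_7 5$) and cancellation of the unit $\left(\frac{q+1}{2}\right)^2$. Your bookkeeping of the scalar factors and the residue shift $7n+12 = 7(n+1)+5$ is accurate.
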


The third and last of N. C. Bonciocat's conjectures is the following result, which unfortunately cannot be generalized (at least not immediately) via our techniques.

\begin{proposition}[Bonciocat's Third Conjecture]
For any $n \geq 0$, one has:
\begin{equation}
{\sigma_{odd}}^{*2}(5n+1) \equiv_5 0
\end{equation}
\end{proposition}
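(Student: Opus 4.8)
The plan is to pass to generating functions and exploit the special shape of the odd-divisor series modulo $5$. Writing $S_{odd}(X):=\sum_{n\ge0}\sigma_{odd}(n)X^n$, the target congruence is the statement that the coefficients of $X^{5n+1}$ in $S_{odd}(X)^2=\sum_n\sigma_{odd}^{*2}(n)X^n$ all vanish modulo $5$; equivalently, $\partial(\partial-2)(\partial-3)(\partial-4)\bigl(S_{odd}^2\bigr)\equiv_5\hat 0$. From $\sigma_{odd}=\sigma-\sigma_{even}$ and formula $(102)$ one gets at once $S_{odd}=S-2S(X^2)$, and a logarithmic-derivative computation identical to $(2)$ shows that $S_{odd}=\partial R/R$, where $R:=\prod_{k\ge1}(1+X^k)=P/P(X^2)=E(X^2)/E(X)$. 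Thus $\sigma_{odd}$ plays, for the product $R$, exactly the role that $\sigma$ plays for $P$ in relation $(1)$.

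First I would isolate the part that is already known. Expanding, $S_{odd}^2=S^2-4\,S\,S(X^2)+4\,S(X^2)^2$. The coefficient of $X^{5n+1}$ in $S^2$ is $\sigma^{*2}(5n+1)$, which vanishes modulo $5$ by Theorem $4.1$ (equivalently, by the classical identity $\sigma^{*2}(n)=\tfrac{1}{12}\bigl(5\sigma_3(n)-(6n-1)\sigma(n)\bigr)$, which modulo $5$ reads $\sigma^{*2}(n)\equiv_5 3(1-n)\sigma(n)$ and so is $\hat0$ when $n\equiv_5 1$). Hence the problem reduces to proving
\[
\left[X^{5n+1}\right]\Bigl(S(X^2)\bigl(S(X)-S(X^2)\bigr)\Bigr)\equiv_5 0\qquad(\forall n).
\]
To attack this mixed term I would bring in Jacobi's triple product in the form $R^{-2}=\Theta(X)/E(X^2)$ with $\Theta(X):=\sum_{n\in\mathbb Z}(-1)^nX^{n^2}$: since the exponents $n^2$ only occupy the residues $0,1,4$ modulo $5$, the series $\Theta$ carries no powers $X^m$ with $m\equiv_5 2,3$. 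Combined with the Freshman's-dream identities $R^5\equiv_5 R(X^5)$ and $E(X^2)^5\equiv_5 E(X^{10})$ (whose factors are transparent to residues modulo $5$, exactly as the factor $P^5$ was in the proof of Theorem $2.2$), this restricted support should let a suitable power of $\partial$ annihilate the residue-$1$ coefficients of the mixed term. An alternative, and probably more reliable, finish is to quote a closed Besge-type evaluation of the convolution $\sum_{a+2b=N}\sigma(a)\sigma(b)$ in terms of $\sigma_3$ and $\sigma$ (such identities are catalogued in \cite{Hahn}) and reduce it modulo $5$ at $N\equiv_5 1$, paralleling the one-line reduction of $\sigma^{*2}$ above.

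The main obstacle, and the reason this congruence resists the general algorithm of the paper, is the coupling visible in $S_{odd}=S-2S(X^2)$: the generating series of $\sigma_{odd}$ entangles the variable $R$ (which produces $S_{odd}=\partial R/R$) with the auxiliary series $E(X^2)$ (which produces $S(X^2)$), and no pure power $R^c$ has a clean theta support on its own, since every such power drags in a factor of $E(X^2)$. Consequently there is no single-variable base relation purely in $S_{odd}$ of the type furnished by Theorem $3.2$, so the Gr\"obner-basis machinery does not directly apply. The crux of the argument is therefore to tame this mixed convolution modulo $5$, which forces one to keep track of residues modulo $10$ (the parity coming from the $X^2$'s together with the residue modulo $5$) rather than modulo $5$ alone; once the mixed term is evaluated in closed form or pinned down by the $\Theta$-support argument, the congruence follows from the elementary reductions above.
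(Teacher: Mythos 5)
Your reduction is sound as far as it goes: writing $\sigma_{odd}(n)=\sigma(n)-2\sigma(n/2)$, expanding $S_{odd}^2=S^2-4S(X^2)\bigl(S-S(X^2)\bigr)$, and disposing of the $S^2$ piece via the already-established $\sigma^{*2}(5n+1)\equiv_5 0$ is all correct, as are the identities $R=E(X^2)/E(X)$, $S_{odd}=\partial R/R$ and $R^{-2}=\Theta(X)/E(X^2)$. But the proof stops exactly where the real work begins: the mixed term $\sum_{a+2b=N}\sigma(a)\sigma(b)$ is never actually evaluated. Your first proposed finish is unconvincing as stated, because $1$ is a quadratic residue modulo $5$, so $\Theta(X)=\sum(-1)^nX^{n^2}$ \emph{does} carry powers $X^m$ with $m\equiv_5 1$; the support restriction to $\{0,1,4\}$ therefore does not by itself annihilate the residue-$1$ coefficients, and you never exhibit the differential operator or the algebraic combination of $R$, $\Theta$ and $E(X^2)$ that would. (You yourself note that the two-variable entanglement is the obstruction, but you do not overcome it.) Your second finish defers to an unnamed Besge-type identity from \cite{Hahn}; unlike the $\sigma^{*2}$ case, where the $\sigma_3$ coefficient $\tfrac{5}{12}$ conveniently dies modulo $5$, the identities for $\sum_{a+2b=N}\sigma(a)\sigma(b)$ have $\sigma_3(N)$ and $\sigma_3(N/2)$ terms whose coefficients do \emph{not} vanish modulo $5$, so the reduction is not the advertised one-liner: one must invoke something like $d^3\equiv_5 d^{-1}$ for $5\nmid d$, i.e.\ $\sigma_3(m)\equiv_5\sigma(m)/m$ when $5\nmid m$, and check that the surviving terms cancel at $N\equiv_5 1$. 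That Fermat-little-theorem pairing is precisely the content of the missing computation, so the gap is genuine rather than cosmetic.

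For comparison, the paper avoids the mixed convolution altogether by quoting Kim's closed form
\begin{equation*}
{\sigma_{odd}}^{*2}(n)=\tfrac{1}{24}\bigl(11\sigma_3(n)-\sigma_3(2n)-2\sigma_{odd}(n)\bigr),
\end{equation*}
reducing it modulo $5$, writing $n=2^pm$ with $m$ odd, and pairing each divisor $d$ of $m$ with $m/d$ so that $2^p(m/d)\equiv_5 d^{-1}$ when $n\equiv_5 1$; each term then becomes $d-d^{-3}=d(1-d^{-4})\equiv_5 0$ by Fermat's little theorem. If you want to salvage your route, the cleanest repair is to carry out exactly this kind of argument on your mixed term: state the explicit identity for $\sum_{a+2b=N}\sigma(a)\sigma(b)$, reduce modulo $5$ using $\sigma_3(m)\equiv_5\sigma(m)/m$ for $5\nmid m$, and verify the cancellation at $N\equiv_5 1$ in both parity cases of $N$.
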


\begin{proof}
While this result is not directly related to the methods presented in this paper, we give it a short proof for the sake of completeness, and to illustrate a different way of obtaining divisor function congruences, starting from certain convolution identities. We start from the following result of D. Kim \cite{Kim, Kim2, Kim3}:

\begin{equation}
{\sigma_{odd}}^{*2}(n) = \frac{1}{24}\left(11\sigma_3(n) - \sigma_3(2n) - 2\sigma_{odd}(n)\right)
\end{equation}

Here, $\sigma_3(n)$ denotes the sum of the cubes of all positive divisors of $n$; the referenced papers of Kim gather many more identities of this form. Taking $(109)$ modulo 5, we find that:

\begin{equation}
{\sigma_{odd}}^{*2}(n) \equiv_5 2\sigma_{odd}(n) + \left(\sigma_3(2n) - \sigma_3(n)\right)
\end{equation}

Write $n = 2^p m$ such that $m$ is odd. Then we can rewrite $(110)$ as:

\begin{equation}
\begin{split}
{\sigma_{odd}}^{*2}(n) &\equiv_5 2\sigma(m) + \sum_{\substack{d \divides 2^{p+1}m \\ d \not\divides 2^pm}} d^3 \\
&= 2\sigma(m) + \sum_{d \divides m} (2^{p+1}d)^3 \\
&= 2\sum_{d \divides m} \left(4(2^p d)^3 + d\right) \\
&= \sum_{d \divides m} \left(4(2^p d)^3 + d\right) + \sum_{d \divides m} \left(4\left(2^p \frac{m}{d}\right)^3 + \frac{m}{d} \right) \\
&= 2\sum_{d \divides m} \left(4\left(2^p\frac{m}{d}\right)^3 + d\right)
\end{split}
\end{equation}

But $2^p m$ is precisely $n$, so for $n \equiv_5 1$, $2^p\frac{m}{d} \equiv_5 d^{-1}$ (note that $d \not\equiv_5 0$ because $d \divides m \divides n \equiv_5 1$). Thus each term of the sum above is congruent modulo 5 to $d - d^{-3} = d(1 - d^{-4}) \equiv_5 0$, since $d^4 \equiv_5 1$ by Fermat's Little Theorem. Thus the whole sum is divisible by $5$ when $n \equiv_5 1$, which completes our proof.
\end{proof}

\begin{remark}
While it is possible to obtain congruences like the one in $(108)$ starting from identities of divisor function convolutions, the computations generally get very complicated because of the higher-order divisor functions $\sigma_k$; there is no guarantee that such an approach would be successful with more general classes of congruences.
\end{remark}

The next proposition extends a few divisor function congruences gathered in Theorem $4.1$:

\begin{proposition}
For any $n \geq 0$, one has:
\begin{equation}
\sigma_{inv}(n) \equiv_5 0, \text{ for } 1 < n \not\equiv_5 0
\end{equation}
\end{proposition}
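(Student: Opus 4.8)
The plan is to make the generating function of $\sigma_{inv}$ explicit and to collapse the whole statement into a single differential relation modulo $5$ that can then be read off coefficient-wise. Writing $G(X) := \sum_{n \geq 0} \sigma_{inv}(n) X^n$, the defining equation $(99)$ says precisely that $G$ is the multiplicative inverse of $\sum_{n \geq 0} \sigma(n+1)X^n = S(X)/X$; hence $G \cdot S = X$ as formal power series. Note that $G$ has integer coefficients (since $S/X$ has constant term $\sigma(1) = 1$), so reduction modulo $5$ is legitimate, and $G \in (\mathbb{Z}/5\mathbb{Z})[[X]]$ after reduction.

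First I would apply the derivation $\partial$ to the identity $GS = X$. Since $\partial X = X$ and $\partial$ obeys the Leibniz rule, this yields $(\partial G)S + G\,\partial S = GS$. To eliminate $\partial S$, I would invoke the base relation for $S$ when $q = 5$, recorded in Example $3.3$ as $\hat{4}S^2 + \hat{3}S + \hat{2}\partial S = \hat{0}$; solving for $\partial S$ (multiplying by $\hat{2}^{-1} = \hat{3}$) gives $\partial S \equiv_5 3S^2 + S$. Substituting this in and simplifying leaves $(\partial G)S + 3GS^2 \equiv_5 0$, i.e. $S\,(\partial G + 3GS) \equiv_5 \hat{0}$. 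Because $(\mathbb{Z}/5\mathbb{Z})[[X]]$ is an integral domain and $S \neq \hat{0}$, the factor $S$ cancels, and using $GS = X$ once more produces the clean relation
\[
\partial G \equiv_5 2X.
\]

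The conclusion should then be immediate from the position-preserving property $(3)$ of $\partial$: the coefficient of $X^n$ in $\partial G$ is $n\,\sigma_{inv}(n)$, so $n\,\sigma_{inv}(n) \equiv_5 0$ for every $n \geq 2$, while $n = 1$ alone contributes the nonzero value $\sigma_{inv}(1) \equiv_5 2$. For any $n$ with $n \not\equiv_5 0$ the integer $n$ is invertible modulo $5$, whence $\sigma_{inv}(n) \equiv_5 0$ as soon as $n > 1$ — exactly the claimed range. The single exception at $n = 1$ and the absence of any constraint when $5 \mid n$ are both consistent with the statement.

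The main obstacle here is conceptual rather than computational: recognizing that the convolution inverse $G$ is controlled by the very same base relation in $S$ that drives the rest of the paper, and packaging everything into the one-line relation $\partial G \equiv_5 2X$. Once the index shift in $(99)$ is correctly translated into $GS = X$, every later manipulation is routine, the only care needed being the cancellation of $S$ in the integral domain $(\mathbb{Z}/5\mathbb{Z})[[X]]$. As an alternative route avoiding the base relation, I note that $G = X/S = P/P'$ coincides with the auxiliary series $T$ from the proof of Theorem $2.1$, for which $T' \equiv_5 2$ was already established; this is equivalent to $\partial G \equiv_5 2X$ and yields the same conclusion.
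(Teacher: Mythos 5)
Your proposal is correct and takes essentially the same route as the paper: both arguments reduce the claim to the single relation $\partial G \equiv_5 2X$ (equivalently $T' \equiv_5 2$ for $T = P/P'$, which is precisely relation $(25)$ that the paper invokes after identifying $T(X) = \sum_n \sigma_{inv}(n)X^n$), and then read off coefficients using the position-preserving property of $\partial$. The only cosmetic difference is that your main derivation obtains this relation from $GS = X$ together with the base equation $\hat{4}S^2 + \hat{3}S + \hat{2}\partial S = \hat{0}$ of Example $3.3$ rather than quoting $(25)$ directly --- an equivalence you yourself point out in your closing alternative.
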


\begin{proof}
We will use a partial result reached within the proof of Theorem $2.1$ (see relation $(25)$):

\begin{equation}
T' \equiv_5 2,
\end{equation}

where $T := \frac{P}{P'}$. We can write by this definition that:

\begin{equation}
T^{-1}(X) := \frac{P'}{P}(X) = \sum_{n = 0}^\infty \sigma(n+1)X^n
\end{equation}

The second equality above holds because $\frac{XP'(X)}{P(X)} = \frac{\partial P}{P}(X) = S(X) = \sum_{n = 1}^\infty \sigma(n)X^n$. Now by the way we defined $\sigma_{inv}$ in $(99)$, we see that:

\begin{equation}
T(X) = \sum_{n = 0}^\infty \sigma_{inv}(n)X^n
\end{equation}

However, $(113)$ implies that $T$ can only contain (modulo 5) powers of the form $X^n$ where $n \equiv_5 0$, except for $n = 1$. Therefore, one has $\sigma_{inv} (n) \equiv_5 0$ for all $1 < n \not\equiv_5 0$.
\end{proof}

\begin{remark}
The congruences $\sigma^{*4}(5n + r) \equiv_5 0$ for $r \equiv_5 1, 2, 3$, established in Section $4$, can be shifted by a multiple of $5$ in the convolution exponent, due to Lemma $3.2$:

\begin{equation}
\sigma^{*-1}(5n + r) \equiv_5 0, \quad \text{for } r \in \{1, 2, 3\}
\end{equation}

The relation above holds true provided that we understand the function $\sigma^{*-1}$ properly. Indeed, $\sigma$ has no convolution inverse by the standard definition because $\sigma(0) = 0$, but we can resolve this by allowing $\sigma^{*-1}(-1) = 1$. This corresponds to what happens when we try to invert the power series $S(X)$: we must use a negative power $X^{-1}$ in the inverse. In this sense, one can easily see that $\sigma_{inv}(n) = \sigma^{*-1}(n-1)$ for all $n$, so equation $(116)$ can be rephrased as:

\begin{equation}
\sigma_{inv}(5n + r) \equiv_5 0, \quad \text{for } r \in \{2, 3, 4\}
\end{equation}

This is indeed a particular case of the more general equation $(112)$. Hence, Proposition $5.3$ is just an extension of three previously proven divisor function congruences, for the particular convolution exponent $k = -1$.

\end{remark}

We end with a congruence for an interesting summation that resembles a (discrete) convolution:

\begin{proposition}
For any positive integer $n$, one has:
\begin{equation}
\sum_{a+b = n} \sigma(ab) \equiv_5 2(n^2 - 1),
\end{equation}

where it is understood that $a$ and $b$ are integers (although it is sufficient if they range over the positive integers, since $\sigma(m) = 0$ by convention for $m \leq 0$). In particular, the residues modulo $5$ of the LHS are periodic with period $5$ (in terms of $n \geq 1$): $0, 1, 1, 0, 3, 0, 1, 1, 0, 3, \ldots$
\end{proposition}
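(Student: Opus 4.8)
The plan is to reduce the exotic sum $\sum_{a+b=n}\sigma(ab)$ to the ordinary self-convolution $\sigma^{*2}$, whose residues modulo $5$ are already controlled by the base relations of Section $3$, and then to finish with elementary Dirichlet-convolution bookkeeping. Concretely, I would first prove the clean reduction formula
\[
\sum_{a+b=n}\sigma(ab) \;=\; \sum_{d \mid n} \mu(d)\, d\, \sigma^{*2}\!\left(\tfrac{n}{d}\right),
\]
valid as an identity of integers (here $\mu$ is the M\"obius function), and only afterwards pass to residues mod $5$. Note that only $1 \le a \le n-1$ contribute, since $\sigma(ab) = 0$ whenever $a \le 0$ or $b \le 0$.

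The engine behind this reduction is the Busche--Ramanujan identity
\[
\sigma(ab) \;=\; \sum_{d \mid \gcd(a,b)} \mu(d)\, d\, \sigma\!\left(\tfrac{a}{d}\right)\sigma\!\left(\tfrac{b}{d}\right),
\]
which I would verify by multiplicativity: it suffices to check the case $a = p^i$, $b = p^j$ on prime powers, where only the terms $d = 1$ and $d = p$ survive and a short computation with $\sigma(p^m) = (p^{m+1}-1)/(p-1)$ gives equality. Substituting this into the target sum and writing $a = d a'$, $b = d b'$ with $d \mid \gcd(a,b)$, the constraint $a+b=n$ becomes $d \mid n$ together with $a'+b' = n/d$; summing over $a', b'$ then collapses the inner sum to $\sigma^{*2}(n/d)$ and yields the reduction formula displayed above.

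For the mod-$5$ step I would invoke the base relation in $S$ for $q=5$ from Example $3.3$, namely $\hat 4 S^2 + \hat 3 S + \hat 2\,\partial S = \hat 0$, which (reading off the coefficient of $X^m$) says precisely that
\[
\sigma^{*2}(m) \;\equiv_5\; (2m+3)\,\sigma(m);
\]
equivalently one may cite the classical Eisenstein formula $\sigma^{*2}(m) = \tfrac{5}{12}\sigma_3(m) + \bigl(\tfrac{1}{12}-\tfrac{m}{2}\bigr)\sigma(m)$ and reduce it mod $5$. Plugging $m = n/d$ into the reduction formula gives
\[
\sum_{a+b=n}\sigma(ab) \;\equiv_5\; \sum_{d\mid n}\mu(d)\,(2n+3d)\,\sigma\!\left(\tfrac{n}{d}\right) \;=\; 2n\!\sum_{d\mid n}\mu(d)\,\sigma\!\left(\tfrac{n}{d}\right) + 3\!\sum_{d\mid n}\mu(d)\,d\,\sigma\!\left(\tfrac{n}{d}\right).
\]
The two Dirichlet convolutions are standard: $\sum_{d\mid n}\mu(d)\,\sigma(n/d) = n$ (since $\sigma = \mathbf{1}\ast\mathrm{Id}$, the convolution $\mu\ast\sigma$ is the identity function $n\mapsto n$), and $\sum_{d\mid n}\mu(d)\,d\,\sigma(n/d) = 1$ (a multiplicative function equal to $1$ on every prime power). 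This leaves $2n^2 + 3 \equiv_5 2(n^2-1)$, as claimed; the stated periodicity $0,1,1,0,3,\dots$ is then immediate from evaluating $2(n^2-1)$ on residues mod $5$.

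I expect the main obstacle to be the reduction step rather than the arithmetic: establishing the Busche--Ramanujan identity cleanly and, above all, justifying the change of variables $a = d a'$, $b = d b'$ without double-counting, which works precisely because each pair $(a,b)$ is recovered from a unique triple $(d,a',b')$ as $d$ ranges over the divisors of $\gcd(a,b)$. Once the reduction formula is in hand, everything downstream is routine modular arithmetic powered by the paper's own base relation for $\sigma^{*2}$.
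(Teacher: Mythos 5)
Your proof is correct, and it follows the same overall strategy as the paper's: reduce $\sum_{a+b=n}\sigma(ab)$ to $\sigma^{*2}$ via a divisor-sum identity over $\gcd(a,b)$, then feed in the mod-$5$ relation $S^2 \equiv_5 2(\partial S - S)$. The difference is the direction in which you run the M\"obius inversion. The paper proves the identity $\sigma(a)\sigma(b) = \sum_{d \mid \gcd(a,b)} d\,\sigma(ab/d^2)$ (its Lemma 5.1, by induction on the gcd), deduces $\sigma^{*2} = \mathrm{Id} * f$ where $f(n) = \sum_{a+b=n}\sigma(ab)$, and then argues that since M\"obius inversion determines $f$ uniquely, it suffices to exhibit $2((n/d)^2-1)$ as a solution --- which it does by rewriting $(2n-2)\sigma(n)$ as $\sum_{d\mid n} d \cdot 2((n/d)^2 - 1)$. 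You instead start from the inverted identity (Busche--Ramanujan, which is the exact M\"obius inverse of the paper's lemma and carries the same content, though you establish it by multiplicativity and a prime-power check rather than by induction on the gcd), obtain $f = (\mu\cdot\mathrm{Id}) * \sigma^{*2}$ explicitly, and finish by evaluating the two standard convolutions $\mu * \sigma = \mathrm{Id}$ and $(\mu\cdot\mathrm{Id}) * \sigma = \mathbf{1}$. Your endgame is arguably cleaner --- it computes the answer directly rather than verifying a guessed one against a uniqueness principle --- at the modest cost of introducing $\mu$. The two uses of the mod-$5$ input agree: your $(2m+3)\sigma(m)$ is the paper's $(2m-2)\sigma(m)$, and your final $2n^2+3 \equiv_5 2(n^2-1)$ matches the claim.
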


\begin{proof}
We start by proving the following lemma:

\begin{lemma}
For any positive integer $n$, one has:
\begin{equation}
\sigma(a)\sigma(b) = \sum_{d \divides \gcd(a, b)} d \q\sigma\left( \frac{ab}{d^2} \right)
\end{equation}
where the sum is taken over all positive divisors of $n$.
\end{lemma}

To prove the lemma, induct on the value of $g := \gcd(a, b)$. When $g = 1$, i.e. $a$ and $b$ are relatively prime, we know that $\sigma(a)\sigma(b) = \sigma(ab)$ since $\sigma$ is multiplicative. Next, for $g > 1$, pick a maximal prime power $p^k$ dividing $g$ and write $g = p^k g'$, $a = p^k a'$, $b = p^k b'$. Assume WLOG that $b'$ is not divisible by $p$. Then $\gcd(p^{q}a', b') = \gcd(a', b') = g'$, for any $q \geq 0$. Using the induction hypothesis on $g'$, we see that:

\begin{equation}
\begin{split}
\sum_{d \divides p^k g'} p^k d \q\sigma\left(\frac{p^{2k} a'b'}{d^2}\right)
&= \sum_{i = 0}^k p^i \sum_{d \divides g'} d\q\sigma\left(\frac{p^{2(k-i)} a'b'}{d^2}\right) \\
&= \sum_{i = 0}^k p^{k-i} \sum_{d \divides g'} d\q\sigma\left(\frac{\left(p^{2i} a'\right)b'}{d^2}\right) \\
&= \sum_{i = 0}^k p^{k-i} \sigma\left(p^{2i}a'\right)\sigma(b')
\end{split}
\end{equation}

On the other hand, the LHS of $(119)$ can in this context be written as:

\begin{equation}
\begin{split}
\sigma(a)\sigma(b) &= \sigma\left(p^k a'\right)\sigma\left(p^k b'\right) \\
&= \sigma\left(p^k a'\right)\sigma\left(p^k\right) \sigma(b')
\end{split}
\end{equation}

Thus to complete the induction, it suffices to show that:

\begin{equation}
\sum_{i = 0}^k p^{k-i} \sigma\left(p^{2i} a'\right) = \sigma\left(p^k a'\right)\sigma\left(p^k\right)
\end{equation}

By writing $a' = p^j c$ with $j$ maximal, one can take away a factor of $\sigma(c)$ from both sides of the relation above (since $\sigma$ is multiplicative) to get the equivalent statement:

\begin{equation}
\begin{split}
\sum_{i = 0}^k p^{k-i} \sigma\left(p^{2i+j}\right) = \sigma\left(p^{k+j}\right)\sigma\left(p^k\right) \quad &\iff \quad \sum_{i = 0}^k p^{k-i} \frac{p^{2i+j+1}-1}{p-1} = \frac{p^{k+j+1} - 1}{p-1} \frac{p^{k+1} - 1}{p-1} \\
&\iff \quad \sum_{i = 0}^k \left(p^{k+i+j+1}-p^{k-i}\right) = \left(p^{k+j+1} - 1\right) \sum_{i = 0}^k p^i
\end{split}
\end{equation}

The last relation is easily checked, so the proof of our lemma is complete.

Now denote the LHS of equation $(118)$ by $f(n)$. Using the lemma, we can see that:

\begin{equation}
\begin{split}
\sigma^{*2}(n) &= \sum_{a + b = n} \sigma(a)\sigma(b) \\
&= \sum_{a + b = n} \q\sum_{d \divides \gcd(a, b)} d\q \sigma\left(\frac{ab}{d^2}\right) \\
&= \sum_{d \divides n} d \sum_{\substack{a + b = n, \\ d \divides \gcd(a, b)}} \sigma\left(\frac{ab}{d^2}\right) \\
&= \sum_{d \divides n} d \sum_{a + b = \frac{n}{d}} \sigma(ab) \\
&= \sum_{d \divides n} d \q f\left(\frac{n}{d}\right)
\end{split}
\end{equation}

By the \emph{M\"obius Inversion Formula}, the identity  above uniquely determines $f$ on the positive integers. In particular, it uniquely determines the residues of $f$ modulo $5$.

This is where our previous congruences become helpful. We turn back to equations $(113)$ and $(114)$ to infer that:

\begin{equation}
2 \equiv_5 \left(\frac{X}{S(X)}\right)' = \frac{S(X) - XS'(X)}{S(X)^2}
\end{equation}

Thus:
\begin{equation}
S^2 \equiv_5 2(\partial S - S)
\end{equation}

In fact, we have reached this differential equation before - or at least our algorithms have; see the second relation in $(65)$. By identifying coefficients, we get that:

\begin{equation}
\sigma^{*2}(n) \equiv_5 2n\sigma(n) - 2\sigma(n) = \sum_{d \divides n} d \cdot 2\left(\left(\frac{n}{d}\right)^2 - 1\right)
\end{equation}

As a side note, the relation above can also be derived from an identity of Besge \cite{Hahn}. From $(124)$ and $(127)$ we deduce (using the remark about the \emph{M\"obius Inversion Formula}) that:

\begin{equation}
f(n) \equiv_5 2(n^2 - 1),
\end{equation}

for all positive integers $n$, which completes our proof.

\end{proof}

\section{Remarks}

\begin{enumerate}
\item Winquist's identity \cite{Winquist} might be useful as a third source of base relations, when one considers congruences modulo bigger primes. The conversion of this identity to a differential equation (modulo a prime $q$) would follow the same principles as that of Euler's Pentagonal identity and Jacobi's identity.

\item As we noted in Section $3$, the differential equations that we can obtain in terms of the power series $E$ are usually shorter and simpler than the differential equations in terms of $S$. Therefore, if all the Gr\"obner bases computations were done in terms of $E$ instead of $S$, our algorithms might become slightly more efficient. However, this would require a conversion of the target divisor function congruences to differential equations in $E$ (by the relation $S = -\frac{\partial E}{E}$), which would significantly increase the complexity of the target equations. In particular, to rewrite the term $\partial^k S$ in terms of the power series $E$, one could use the following recursive step:
\begin{equation}
\begin{split}
\partial^k S
&= \partial ( \partial^{k-1} S) \\
&= \partial \left( \frac{f_{k-1}(E)}{E^k} \right) \\
&= \frac{E \partial f_{k-1}(E) - kf_{k-1}(E)\partial E}{E^{k+1}},
\end{split}
\end{equation}

where $f_{k-1}(E)$ is defined such that $\partial^k S = \frac{f_k(E)}{E^{k+1}}$. In fact, the relation above gives the recursive identity $f_k(E) = E \partial f_{k-1}(E) - kf_{k-1}(E) \partial E$. Since this recursion starts from $\partial^0 S = S = \frac{-\partial E}{E}$, whence $f_0(E) = -\partial E$, one can see that $f_k$ remains a differential polynomial for all $k \geq 0$.

In order to convert an entire polynomial differential equation in $S$ to one in terms of $E$, one would need to apply the transformation in $(129)$ to each factor of the form $\partial^k S$, and then multiply everything by an adequate power of $E$. This would become computationally problematic when one deals with monomials like $\left(\partial^k S\right)^{l}$, since the converted expression would have a (potentially huge) denominator of $E^{(k+1)l}$. Moreover, the multiplication of such expressions would lead to very lengthy polynomials. Nevertheless, the reader is encouraged to try to improve this alternative approach for proving divisor function congruences.

\item In Subsection $4.3$, we did not exhaust all possible congruences of the forms $p_w^{*k} \equiv_q 0$ and $\sigma_w^{*k} \equiv_q 0$, since we only considered the case $k = 2$ for reasons of efficiency. The computationally difficult part is generating such congruences that verify numerically up to large values; if the reader could find a quicker way to generate these congruences numerically, then their algorithmic proofs based on our differential algebraic methods should not take too long.

\item It would be interesting to study if the divisor function congruences proven in this paper, combined with the properties of multiplicative functions, can lead to more general results than the one from Proposition $5.3$. A common approach to such results could be based on Dirichlet series and generalizations of Lemma $5.1$ (e.g., expressing the product $\sigma(a)\sigma(b)\sigma(c)$).
\end{enumerate}
\pagebreak


\begin{thebibliography}{30}

\bibitem {Ramanujan}
Berndt, B. C.
\emph{Ramanujan's congruences for the partition function modulo 5, 7, and 11}.
Int. J. Number Theory 3 (2007), no. 3, 349–354. 

\bibitem{Ono}
Eichhorn, D.; Ono, K.
\emph{Congruences for partition functions}. Analytic number theory, Vol. 1 (1995), 309–321.

\bibitem{Abramo}
Abramowitz, M.; Stegun, I. A.
\emph{Handbook of mathematical functions with formulas, graphs, and mathematical tables}. Dover Publications, Inc., New York (1992).

\bibitem {Pentagonal}
Liu, J. C.
\emph{Some finite generalizations of Euler's pentagonal number theorem}. 
Czechoslovak Math. J. 67(142) (2017), no. 2, 525–531. 

\bibitem{Jacobi}
Srivastava, H. M.; Chaudhary, M. P.; Chaudhary, S.
\emph{Some theta-function identities related to Jacobi's triple-product identity}. 
Eur. J. Pure Appl. Math. 11 (2018), no. 1, 1–9.

\bibitem {Winquist}
Nupet, C.; Kongsiriwong, S.
\emph{Two simple proofs of Winquist's identity}. 
Electron. J. Combin. 17 (2010), no. 1, Research Paper 116.

\bibitem {JacWin}
Berndt, B. C.; Gugg, C.; Kim, S.
\emph{Ramanujan's elementary method in partition congruences}. Partitions, q-series, and modular forms, 13–21, 
Dev. Math., 23, Springer, New York (2012).

\bibitem {ShortProof}
Hirschhorn, M. D.
\emph{A short and simple proof of Ramanujan's mod11 partition congruence}.
J. Number Theory 139 (2014), 205–209.

\bibitem{Algorithm1}
Gnang, E.; Zeilberger, D.
\emph{Generalizing and implementing Michael Hirschhorn's amazing algorithm for proving Ramanujan-type conjectures}. 
Gac. R. Soc. Mat. Esp. 17 (2014), no. 1, 129–138. 

\bibitem {Ono2}
Ono, K.
\emph{Distribution of the partition function modulo m}. 
Ann. of Math. (2) 151 (2000), no. 1, 293–307. 

\bibitem{Ono3}
Ahlgren, S.; Ono, K.
\emph{Congruence properties for the partition function}. 
Proc. Natl. Acad. Sci. USA 98 (2001), no. 23, 12882–12884. 

\bibitem {Newman}
Newman, M.
\emph{Congruences for the coefficients of modular forms and some new congruences for the partition function}. 
Canad. J. Math. 9 (1957), 549–552.

\bibitem {Lazarev}
Lazarev, O.; Mizuhara, M. S.; Reid, B.; Swisher, H. 
\emph{Extension of a proof of the Ramanujan congruences for multipartitions}. 
Ramanujan J. 45 (2018), no. 1, 1–20.

\bibitem {Algorithm2}
Radu, S.
\emph{An algorithmic approach to Ramanujan's congruences}. 
Ramanujan J. 20 (2009), no. 2, 215–251. 

\bibitem {Gandhi}
Gandhi, J. M.
\emph{Simple proofs for the Ramanujan congruences p(5m+4)≡0 (mod 5) and p(7m+5)≡0 (mod 7)}. Recent Progress in Combinatorics (1969), pp. 221–222 Academic Press, New York 

\bibitem {Gandhi2}
Gandhi, J. M.
\emph{Generalization of Ramanujan's congruences p(5m+4)≡0(mod5) and p(7m+5)≡0(mod7)}. 
Monatsh. Math. 69 (1965), 389–392. 

\bibitem {Gandhi3}
Gandhi, J. M.
\emph{Congruences for pr(n) and Ramanujan's τ function}. 
Amer. Math. Monthly 70 (1963), 265–274.

\bibitem {Gandhi4}
Gandhi, J. M.
\emph{Congruences for σ(n)}. 
Proc. Nat. Acad. Sci. India Sect. A 37 (1967), 193–194. 

\bibitem {Ramanathan}
Ramanathan, K. G.
\emph{Congruence properties of σ(n), the sum of the divisors of n}. 
Math. Student 11, (1943), 33–35.

\bibitem {Gupta}
Gupta, H.
\emph{Congruence properties of σ(n)}. 
Math. Student 13, (1945), 25–29. 

\bibitem {Bonciocat}
Bonciocat, N. C.
\emph{Congruences for the convolution of divisor sum function}. 
Bull. Greek Math. Soc. 47 (2003), 19–29. 

\bibitem {RefBonc}
Gallardo, L. H.
\emph{On Bonciocat's congruences involving the sum of divisors function}.
Bull. Greek Math. Soc. 53 (2007), 69–70. 

\bibitem {Hahn}
Hahn, H.
\emph{Convolution sums of some functions on divisors}. 
Rocky Mountain J. Math. 37 (2007), no. 5, 1593–1622. 

\bibitem {17powers}
Hughes, K.
\emph{Ramanujan congruences for p−k(n) modulo powers of 17}. 
Canad. J. Math. 43 (1991), no. 3, 506–525. 

\bibitem{Chen}
Chen, W. Y. C.; Du, D. K.; Hou, Q.; Sun, L. H.
\emph{Congruences of multipartition functions modulo powers of primes}.
Ramanujan J. 35 (2014), no. 1, 1–19. 

\bibitem{Dazhao}
Tang, D.
\emph{Congruences modulo powers of 5 for k-colored partitions}. 
J. Number Theory 187 (2018), 198–214. 

\bibitem{Wang}
Wang, L.
\emph{Congruences modulo powers of 11 for some partition functions}. 
Proc. Amer. Math. Soc. 146 (2018), no. 4, 1515–1528. 

\bibitem {Rosenfeld}
Gustavson, R.; Ovchinnikov, A.; Pogudin, G.
\emph{New order bounds in differential elimination algorithms}.  
J. Symbolic Comput. 85 (2018), 128–147. 

\bibitem {tdeg}
Zobnin, A. I.
\emph{Differential standard bases with respect to inverse lexicographic orderings}. 
J. Math. Sci. (N.Y.) 163 (2009), no. 5, 523–533. 

\bibitem{Andrews}
Andrews, G. E.
\emph{A survey of multipartitions: congruences and identities}. Surveys in number theory, 1–19, 
Dev. Math., 17, Springer, New York (2008).

\bibitem{Chu}
Chu, W.; Zhou, R. R.
\emph{Congruences of three multipartition functions}.
Math. Commun. 17 (2012), no. 2, 433–445.

\bibitem{Saikia}
Saikia, N.; Chetry, J.
\emph{Infinite families of congruences modulo 7 for Ramanujan's general partition function}. 
Ann. Math. Qué. 42 (2018), no. 1, 127–132. 

\bibitem{Uroz}
Urroz, J. J.
\emph{Congruences for the partition function in certain arithmetic progressions}.
Discrete Math. 211 (2000), no. 1-3, 275–280. 

\bibitem {Kim}
Kim, D.
\emph{Convolution sums of odd and even divisor functions}. 
Honam Math. J. 35 (2013), no. 3, 445–506. 

\bibitem {Kim2}
Kim, D.
\emph{Convolution sums of odd and even divisor functions II}.
Honam Math. J. 37 (2015), no. 2, 149–185. 

\bibitem {Kim3}
Kim, D.; Park, Y. K.
\emph{Bernoulli identities and combinatoric convolution sums with odd divisor functions}.  
Abstr. Appl. Anal. (2014)

\bibitem{Maple1}
See the MAPLE script "multiPartitions.txt", attached to this paper on arXiv as an ancillary file.

\bibitem{Maple2}
See the MAPLE script "sigmaConvolutions.txt", attached to this paper on arXiv as an ancillary file.

\bibitem{Maple3}
See the MAPLE script "computeBasis.txt", attached to this paper on arXiv as an ancillary file.

\bibitem{Maple4}
See the MAPLE script "groebnerBases.txt", attached to this paper on arXiv as an ancillary file.

\bibitem{C1}
See the C++ source code "generateLinearCombo.txt", attached to this paper on arXiv as an ancillary file.

\bibitem{C2}
See the C++ source code "generateWeightedConv.txt", attached to this paper on arXiv as an ancillary file.

\end{thebibliography}
\end{document}